\newtheorem{thm}{Theorem}[section]
\newtheorem{lem}[thm]{Lemma}
\newtheorem{cor}[thm]{Corollary}
\newtheorem{conj}[thm]{Conjecture}
\newcommand{\sep}{\preceq}
\newcommand{\R}{\mathbb{R}}
\def\newop#1{\expandafter\def\csname #1\endcsname{\mathop{\rm
#1}\nolimits}}
\begin{document}

\begin{center}
{\large \bf  The Real-rootedness of Generalized Narayana Polynomials}
\end{center}

\begin{center}
Herman Z.Q. Chen$^{1}$, Arthur L.B. Yang$^{2}$, Philip B. Zhang$^{3}$\\[6pt]
\par\end{center}

\begin{center}
$^{1,2}$Center for Combinatorics, LPMC\\
 Nankai University, Tianjin 300071, P. R. China

$^{3}$College of Mathematical Science \\
Tianjin Normal University, Tianjin  300387, P. R. China\\[6pt]
\par\end{center}

\begin{center}
Email: $^{1}$\texttt{zqchern@163.com, $^{2}$\texttt{yang@nankai.edu.cn}},
$^{3}$\texttt{zhangbiaonk@163.com}
\par
\end{center}

\noindent \emph{Abstract.}
In this paper, we prove the real-rootedness of two classes of generalized Narayana polynomials:
one arising as the $h$-polynomials of the generalized associahedron associated to the finite Weyl groups, the other arising in the study of the infinite log-concavity of the Boros-Moll polynomials. For the former, Br\"{a}nd\'{e}n has already proved that these $h$-polynomials have only real zeros. We establish certain recurrence relations for the two classes of Narayana polynomials, from which we derive the real-rootedness. To prove the real-rootedness, we use a sufficient condition, due to Liu and Wang, to determine whether two polynomials have interlaced zeros. The recurrence relations are verified with the help of the Mathematica package \textit{HolonomicFunctions}.

\noindent \emph{AMS Classification 2010:} 05A15, 26C10

\noindent \emph{Keywords:} Generalized Narayana polynomials, real zeros, interlacing zeros.

\section{Introduction}

For any nonnegative integers $n$ and $m$, let
\begin{align}
N_{A_{n}}(x) & =  \frac {1} {n+1}\sum_{k=0}^{n}\binom {n+1} {k} \binom {n+1} {k+1}x^k,\label{eq-nara1}\\[6pt]
N_{B_{n}}(x)  & =  \sum_{k=0}^n \binom n k \binom n kx^k,\label{eq-nara2}\\[6pt]
N_{D_{n}}(x)  & =  N_{B_{n}}(x) - n x N_{A_{n-2}}(x), \quad \mbox{for $n\geq 2$},\label{eq-nara3}\\[6pt]
N_{n,m}(x) & =  \sum_{k=0}^{n}\left(\binom{n}{k}\binom{m}{k}-\binom{n}{k+1}\binom{m}{k-1}\right)x^k.\label{eq-nara4}
\end{align}
The polynomial $N_{A_{n}}(x)$ is the classical Narayana polynomial. It is well
known that both $N_{A_{n}}(x)$ and $N_{B_{n}}(x)$ have only real zeros. The
real-rootedness of $N_{D_{n}}(x)$ was proved by Br\"{a}nd\'{e}n
\cite{Braenden2006linear}. In this paper, we shall give a new proof of the
real-rootedness of $N_{D_{n}}(x)$.
It is easy to verify that both $N_{n,n}(x)$ and $N_{n+1,n}(x)$ are just the
polynomial $N_{A_{n}}(x)$. While,
it seems that the polynomials $N_{n,m}(x)$ were not well studied for general $n$
and $m$. In this paper, we shall prove that the polynomials $N_{n,m}(x)$ have
only real zeros for any $n$ and $m$. Let us first review some backgrounds of the
polynomials $N_{D_{n}}(x)$ and $N_{n,m}(x)$.

The polynomials $N_{D_{n}}(x)$ arose in the study of the generalized associahedron
associated to finite Weyl groups, see Fomin and Zelevinsky \cite{Fomin2003Y}.
Let $W$ be a finite Weyl group, and denote by $N_W(x)$ the $h$-polynomial of dual complex of the generalized associahedra associated to $W$.
For the classical Weyl groups of type $A_n$, $B_n$ and $D_n$, these polynomials are given by
$N_{A_{n}}(x)$, $N_{B_{n}}(x)$ and $N_{D_{n}}(x)$, respectively. For more information, we refer the reader to \cite{Armstrong2009Generalized, Athanasiadis2005refinement, FominRoot, Fomin2005Generalized}.
The polynomials $N_{W}(x)$ are called generalized Narayana polynomials. In the original version of  \cite{Reiner2005Charney}, Reiner and Welker asked whether $N_{D_{n}}(x)$ have only real zeros. Later, Br\"{a}nd\'{e}n  \cite{Braenden2006linear} gave an affirmative answer to this question. In fact, Br{\"a}nd{\'e}n \cite{Braenden2006linear} obtained a more general result as follows.

\begin{thm}[{\cite[Theorem 7.1]{Braenden2006linear}}]\label{branthm}
Given a positive integer $n\geq 2$, let $\alpha, \beta \in \R$ be such that $\alpha \geq 0, \alpha +\beta \geq  0$\footnote{Br{\"a}nd{\'e}n \cite{Braenden2006linear} used the condition $2\alpha +\beta > 0$, while, for $n=4, \alpha =1, \beta=-19/10$, it is easy to verify that $F_n^{(\alpha,\beta)}(x)$ has a pair of complex zeros.}. Then the polynomial
\begin{align}\label{eq-ref}
F_n^{(\alpha,\beta)}(x):=\alpha  N_{B_{n}}(x) + \beta nx  N_{A_{n-2}}(x),
\end{align}
has only real zeros.
\end{thm}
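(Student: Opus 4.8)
The plan is to normalize away one of the two parameters, derive a three-term recurrence in $n$ for the resulting one-parameter family, and then induct on $n$ using the Liu--Wang sufficient condition for two polynomials to have interlacing zeros. For the normalization, observe that the zeros of $F_n^{(\alpha,\beta)}$ are unchanged under scaling $(\alpha,\beta)$ by a positive constant, and that when $\alpha=0$ the hypothesis forces $\beta\ge 0$, so $F_n^{(0,\beta)}=\beta\,nx\,N_{A_{n-2}}(x)$ has only real zeros because $N_{A_{n-2}}(x)$ does. Hence we may assume $\alpha=1$, in which case $\alpha+\beta\ge 0$ reads $\beta\ge -1$. Set
\[
f_n(x):=N_{B_n}(x)+\beta\,nx\,N_{A_{n-2}}(x)=N_{D_n}(x)+(1+\beta)\,nx\,N_{A_{n-2}}(x).
\]
The second form presents $f_n$ as a nonnegative combination of polynomials with nonnegative coefficients, so $f_n$ itself has nonnegative coefficients; since $f_n(0)=1$, $\deg f_n=n$, and the leading coefficient of $f_n$ equals $1$, every real zero of $f_n$ lies in $(-\infty,0)$. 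It now suffices to prove that, for each fixed $\beta\ge -1$, the polynomial $f_n$ has only real zeros.

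Both $N_{B_n}(x)$ and $xN_{A_{n-1}}(x)$ satisfy three-term recurrences in $n$ whose coefficients are polynomials in $x$ of degree at most two and in $n$ of degree at most one; for $N_{B_n}$ this is the classical relation $(n+1)N_{B_{n+1}}(x)=(2n+1)(1+x)N_{B_n}(x)-n(1-x)^2N_{B_{n-1}}(x)$, and $xN_{A_{n-1}}(x)$ obeys an analogous one. Consequently $f_n$ satisfies a linear recurrence in $n$ over $\R(x)$ of bounded order, which the Mathematica package \textit{HolonomicFunctions} produces explicitly; the first task is to bring it into the form
\[
A_n(x)\,f_{n+1}(x)=B_n(x)\,f_n(x)+C_n(x)\,f_{n-1}(x),\qquad n\ge n_0,
\]
with $A_n(x)>0$ for $x\le 0$, the leading coefficient of $B_n$ positive, the degrees of $A_n,B_n,C_n$ in the ranges required by the Liu--Wang criterion, and --- the decisive property --- $C_n(x)\le 0$ for all $x\le 0$. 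This last inequality is exactly where the hypothesis $\beta\ge -1$ (equivalently $\alpha+\beta\ge 0$) enters; one expects $C_n(x)$ to equal, up to a manifestly positive factor, $(1-x)^2$ times a linear form in $x$ whose sign on $(-\infty,0]$ is forced by $\beta\ge -1$.

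The base cases are disposed of by direct computation: for instance $f_2(x)=1+(4+2\beta)x+x^2$ has discriminant $4(1+\beta)(3+\beta)\ge 0$ and hence only real zeros, and the finitely many remaining small $n$ (those below $n_0$) are handled similarly, with a little extra care at $\beta=-1$, where $N_{D_2}(x)=(1+x)^2$ has a double zero; there one either begins the induction one step later or deduces the case $\beta=-1$ from the cases $\beta>-1$ by continuity of the zeros of the monic polynomial $f_n$ in $\beta$. For the inductive step, suppose $f_{n-1}$ and $f_n$ have only real zeros, all negative, with $f_{n-1}\preceq f_n$. Evaluating the recurrence at a zero $r$ of $f_n$ gives $A_n(r)\,f_{n+1}(r)=C_n(r)\,f_{n-1}(r)$; since $f_{n-1}(r)$ alternates in sign as $r$ ranges over the zeros of $f_n$, while $A_n(r)>0$ and $C_n(r)\le 0$ there, the numbers $f_{n+1}(r)$ alternate in sign as well. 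A count of sign changes --- one zero of $f_{n+1}$ in each of the $n-1$ open intervals between consecutive zeros of $f_n$, one more to the left of the least zero of $f_n$ (forced by the positive leading coefficient of $f_{n+1}$), and one more between the greatest zero of $f_n$ and $0$ (forced by $f_{n+1}$ being $\le 0$ at that zero and equal to $1$ at $0$) --- yields $n+1$ real zeros of $f_{n+1}$ that interlace those of $f_n$. Thus $f_{n+1}$ has only real zeros and $f_n\preceq f_{n+1}$, which is precisely the conclusion packaged by the Liu--Wang criterion; the criterion also covers the borderline situation $C_n(r)=0$.

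The main obstacle is the verification of the sign condition $C_n(x)\le 0$ on $(-\infty,0]$ for every $\beta\ge -1$. This constraint is essentially sharp --- the footnote to Theorem~\ref{branthm} records that $F_4^{(1,\beta)}(x)$ already has a pair of complex zeros at $\beta=-19/10$ --- so no crude estimate will do, and one must work with the precise coefficient $C_n(x)$ returned by \textit{HolonomicFunctions}. Extracting from the raw holonomic recurrence a three-term relation with the required positivity and degree properties, and then confirming that single inequality, is where essentially all the work of the proof lies.
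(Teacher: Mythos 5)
Your overall strategy (a recurrence in $n$ plus induction via Theorem \ref{liuwangthm2}, with nonnegativity of the coefficients used to confine the zeros to $(-\infty,0]$) is the same as the paper's, and your normalization to $\alpha=1$ and your base cases are fine. But there is a genuine gap at the heart of the plan: you assume that $f_n=F_n^{(1,\beta)}$ admits a \emph{pure} three-term recurrence $A_n(x)f_{n+1}=B_n(x)f_n+C_n(x)f_{n-1}$ with coefficients rational in $n,x$ (and $\beta$) and with $C_n(x)\le 0$ on $(-\infty,0]$, and you defer both its existence and the sign verification to the computer. Such a recurrence is not merely hard to verify --- it very likely does not exist. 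Indeed $u_n=N_{B_n}(x)$ and $v_n=nxN_{A_{n-2}}(x)$ satisfy \emph{different} three-term recurrences, $u_{n+1}=\tfrac{(2n+1)(1+x)}{n+1}u_n-\tfrac{n(1-x)^2}{n+1}u_{n-1}$ and $v_{n+1}=\tfrac{(2n-1)(1+x)}{n}v_n-\tfrac{(n-2)(1-x)^2}{n-1}v_{n-1}$; reducing any putative relation $A_nf_{n+1}-B_nf_n-C_nf_{n-1}=0$ modulo these two recurrences and comparing the $u_n$ and $v_n$ components forces $B_n=A_n\tfrac{(2n+1)(1+x)}{n+1}=A_n\tfrac{(2n-1)(1+x)}{n}$, which is impossible unless $A_n=0$ (barring an unexpected $\R(n,x)$-linear dependence among $u_n,u_{n-1},v_n,v_{n-1}$). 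This is precisely why the paper does not use a shift-only recurrence: Theorem \ref{lem:rec} expresses $F_{n+1}^{(\alpha,\beta)}$ in terms of $F_n^{(\alpha,\beta)}$, $F_{n-1}^{(\alpha,\beta)}$ \emph{and the derivative} $(F_n^{(\alpha,\beta)})'$, and then Theorem \ref{liuwangthm2} is applied with $k=2$, taking $g_1=F_{n-1}^{(\alpha,\beta)}$ and $g_2=(F_n^{(\alpha,\beta)})'$ (which interlaces $F_n^{(\alpha,\beta)}$ by Rolle). In that formulation the sign conditions are easy: $\varphi_1(x)=-c\,(x-1)^2\le 0$ everywhere and $\varphi_2(x)=-c'\,x(x-1)\le 0$ for $x\le 0$, where $c,c'>0$ amounts to positivity of the linear forms $T_1,T_3,T_4$, and this is exactly where $\alpha>0$, $\alpha+\beta\ge 0$ enters --- not through a delicate sharp inequality of the kind you anticipate.

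Two smaller points: your claim that $N_{D_n}(x)$ has nonnegative coefficients (used to place the zeros of $f_n$ in $(-\infty,0]$) is true but needs the computation the paper gives in Lemma \ref{lemm-nopos}, namely $[x^k]F_n^{(\alpha,\beta)}(x)\ge \alpha\binom{n}{k}^2\bigl(1-\tfrac{k(n-k)}{n(n-1)}\bigr)\ge 0$; and your sign-alternation count in the inductive step is essentially a re-proof of the Liu--Wang criterion rather than an application of it, which is harmless but unnecessary. As written, however, the proposal is incomplete: the object on which the whole induction runs (the three-term recurrence with the required sign structure) is neither exhibited nor likely obtainable in the form you specify.
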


Taking $\alpha=1$ and $\beta=-1$ in \eqref{eq-ref}, we get $N_{D_{n}}(x)$.
Besides, one can check that the $h$-polynomial
has only real zeros  for each of the exceptional cases.
As a corollary of Theorem \ref{branthm}, Br{\"a}nd{\'e}n \cite{Braenden2006linear} proved that the $h$-polynomial $N_W(x)$ has only real zeros
for any finite Weyl group $W$.

The polynomials $N_{n,m}(x)$ arose in the study of the infinite log-concavity of the Boros-Moll polynomials.
The Boros-Moll polynomials was introduced by Boros and Moll \cite{Bormol2004} in their study of a quartic integral, and they obtained the following expression for the Boros-Moll polynomials:
\begin{align*}
P_n(x)=2^{-2n}\sum_j 2^j{2n-2j\choose n-j}{n+j\choose j}(x+1)^j.
\end{align*}
Recall that a finite nonnegative sequence $\{a_k\}_{k=0}^n$ is said to be log-concave if
$$a_k^2-a_{k+1}a_{k-1}\geq 0,\qquad \mbox{for $0\leq k\leq n$},$$
where, for convenience, we set $a_{-1}=0$ and $a_{n+1}=0$.
We say that it is infinitely log-concave if for any $i\geq 1$ the $i$-th iterative sequence $\{\mathcal{L}^i(a_k)\}_{k=0}^n$
is nonnegative, where $\mathcal{L}$ is the operator acting on $\{a_k\}_{k=0}^n$ as follows
$$\mathcal{L}(a_k)=a_k^2-a_{k+1}a_{k-1}, \qquad \mbox{for $0\leq k\leq n$}.$$
We say that a polynomial
$$f(x)=\sum_{k=0}^n a_kx^k$$
is infinitely log-concave if its coefficient sequence $\{a_k\}_{k=0}^n$ is infinitely log-concave.
Boros and Moll proposed the following conjecture.
\begin{conj}[\cite{Bormol2004}]\label{boros-moll-conj}
The polynomial $P_n(x)$ is infinitely log-concave.
\end{conj}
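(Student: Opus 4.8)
The plan is to convert the infinitely many inequalities in Conjecture~\ref{boros-moll-conj} into a \emph{single} real-rootedness statement, using the preservation theorem for the operator $\mathcal{L}$ (conjectured by McNamara and Sagan and proved by Br\"{a}nd\'{e}n): the operator $\mathcal{L}$ maps the class of polynomials with nonnegative coefficients and only real zeros into itself. Write $P_n(x)=\sum_k d_k x^k$, where the normalizing factor $2^{-2n}$ is irrelevant to log-concavity, and note that the generating polynomial of the transformed sequence $\{\mathcal{L}(d_k)\}$ is exactly $\mathcal{L}(P_n)(x)=\sum_k(d_k^2-d_{k-1}d_{k+1})x^k$. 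If $P_n(x)$ itself were real-rooted we would be done instantly; but it is not, as already $P_2(x)=(12x^2+30x+21)/8$ has a pair of complex zeros. The first iterate, however, behaves far better: a short computation shows that $\mathcal{L}(P_2)(x)$ has only real zeros. This suggests the following reduction.

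\textbf{Reduction.} I would prove that $\mathcal{L}(P_n)(x)$ has only real zeros for every $n$. Granting this, $\mathcal{L}(P_n)$ lies in the class preserved by $\mathcal{L}$ (its coefficients are the nonnegative log-concavity differences $d_k^2-d_{k-1}d_{k+1}$, whose nonnegativity is the theorem of Kauers and Paule), so by the preservation theorem $\mathcal{L}^{j}\bigl(\mathcal{L}(P_n)\bigr)=\mathcal{L}^{j+1}(P_n)$ is real-rooted with nonnegative coefficients for every $j\geq 0$. In particular the coefficient sequence $\{\mathcal{L}^{i}(d_k)\}_k$ is nonnegative for every $i\geq 1$, which is precisely the infinite log-concavity of $P_n(x)$. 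Thus the entire conjecture collapses to the uniform real-rootedness of the single family $\mathcal{L}(P_n)$, and no separate base cases are required.

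\textbf{Where $N_{n,m}(x)$ enters.} The coefficients $d_k^2-d_{k-1}d_{k+1}$ of $\mathcal{L}(P_n)$ are second-order minors of the Boros-Moll sequence, structurally parallel to the minors $\binom{n}{k}\binom{m}{k}-\binom{n}{k+1}\binom{m}{k-1}$ defining the generalized Narayana polynomials $N_{n,m}(x)$ in \eqref{eq-nara4}. Using the Kauers--Paule recurrence for $d_k$ to eliminate products, I would aim to express $\mathcal{L}(P_n)(x)$ as an explicit combination of polynomials from the $N_{n,m}$ family, and then deduce its real-rootedness from theirs by the very scheme of the present paper: establish recurrences linking $\mathcal{L}(P_n)$ to $\mathcal{L}(P_{n\pm1})$ and to the relevant $N_{n,m}$, verify these identities mechanically with \textit{HolonomicFunctions}, and run an induction whose maintained invariant is the interlacing of consecutive polynomials, checked via the Liu--Wang criterion. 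The real-rootedness of $N_{n,m}(x)$ proved here is the building block that makes this attack concrete.

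\textbf{Main obstacle.} The heart of the difficulty is exactly the reduced claim, that $\mathcal{L}(P_n)$ is real-rooted for \emph{all} $n$: this assertion is at least as strong as the conjecture it is meant to settle, and the nonlinearity of $\mathcal{L}$ makes the location of its zeros resistant to the linear interlacing arguments that suffice for $N_{D_n}$ and $N_{n,m}$. A priori one cannot exclude that the expression of $\mathcal{L}(P_n)$ in terms of the $N_{n,m}$ fails to be a nonnegative, interlacing-compatible combination, in which case the Liu--Wang machinery does not apply directly. Should the first iterate prove intractable, the identical scheme still works at any later level: it suffices to exhibit one $i_0$ at which $\mathcal{L}^{i_0}(P_n)$ is real-rooted for all $n$, the finitely many earlier levels being verified by hand. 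Producing such a uniformly real-rooted iterate of the Boros-Moll sequence — the single step that the real-rootedness of $N_{n,m}(x)$ supports but does not by itself deliver — is what keeps Conjecture~\ref{boros-moll-conj} open.
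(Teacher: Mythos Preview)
The statement you are trying to prove is labeled a \emph{Conjecture} in the paper, and the paper does not prove it; it explicitly says ``While Conjecture~\ref{boros-moll-conj} is open\ldots''. So there is no ``paper's own proof'' to compare against. What the paper does is exactly the reduction you have rediscovered: it observes that the real-rootedness of
\[
Q_n(x)=\sum_{k}(d_k(n)^2-d_{k-1}(n)d_{k+1}(n))x^k=\mathcal{L}(P_n)(x)
\]
for all $n$ (stated as Conjecture~\ref{conj-yang}) would, via Br\"and\'en's Theorem~\ref{bran-thm} together with the Kauers--Paule log-concavity, imply the infinite log-concavity of $P_n$. The paper also records the identity expressing $Q_n(x)$ as a double sum of $N_{i,j}(x)$'s, which is the link to the generalized Narayana polynomials you describe.

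Your proposal is therefore not a proof but a restatement of the paper's own strategy, and you are candid about this in your final paragraph. The genuine gap is precisely the one you name: the real-rootedness of $Q_n(x)$ is itself an open conjecture (Conjecture~\ref{conj-yang}), and the expression of $Q_n$ as a linear combination of the $N_{i,j}$ is \emph{not} a nonnegative combination of mutually interlacing polynomials, so the Liu--Wang criterion does not apply directly. Proving the real-rootedness of $N_{n,m}(x)$, which the paper accomplishes, is a natural building block but does not by itself settle $Q_n$. In short, your plan matches the paper's framing exactly, and both stop at the same unresolved step.
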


The log-concavity of $P_n(x)$  was first conjectured by Moll \cite{moll2002},
and then was proved
by Kauers and Paule \cite{KauPau2007}. The $2$-fold log-concavity of $P_n(x)$
was proved by Chen and Xia \cite{Chx2010}.
Br\"{a}nd\'{e}n \cite{Bran2011} proposed an innovative approach
to the higher-fold log-concavity of $P_n(x)$. He conjectured the real-rootedness
of some variations
of $P_n(x)$,
from which its $3$-fold log-concavity can be deduced. Br\"{a}nd\'{e}n's
conjectures were later
confirmed by Chen, Dou and Yang \cite{chendouyang}. While Conjecture
\ref{boros-moll-conj} is open,
Br\"{a}nd\'{e}n \cite{Bran2011} has proved the infinite log-concavity of real-rooted polynomials,
which was independently conjectured by Stanley, McNamara and Sagan \cite{mcnsag2010}, and Fisk \cite{fisk2008}.

\begin{thm}[{\cite{Bran2011}}] \label{bran-thm}
If
$$f(x)=\sum_{k=0}^n a_k x^k$$ is a real-rooted polynomial with nonnegative coefficients, then so does the polynomial
$$\sum_{k=0}^n (a_k^2-a_{k-1}a_{k+1})x^k.$$
\end{thm}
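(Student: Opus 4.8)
The plan is to recast the assertion in terms of totally nonnegative Toeplitz matrices, where the operation $a_k\mapsto a_k^2-a_{k-1}a_{k+1}$ becomes transparent. Recall the (finite) Aissen--Schoenberg--Whitney theorem: a polynomial $g(x)=\sum_{k=0}^{d}b_kx^k$ with nonnegative coefficients and $b_0\neq0$ has only real zeros if and only if its Toeplitz matrix $(b_{i-j})_{i,j\ge0}$, where we set $b_k:=0$ for $k<0$, is totally nonnegative; its zeros are then nonpositive. First I would reduce to the case $a_0\neq0$: writing $f(x)=x^m h(x)$ with $h(0)\neq0$, a direct check gives
\[
\sum_{k}\bigl(a_k^2-a_{k-1}a_{k+1}\bigr)x^k=x^m\sum_{j}\bigl(h_j^2-h_{j-1}h_{j+1}\bigr)x^j ,
\]
so it suffices to treat $h$ in place of $f$; and $h$ is again real-rooted with nonnegative coefficients and $h(0)\neq0$, so its coefficient sequence is a P\'olya frequency sequence. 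We may therefore assume from now on that $a_0\neq0$, and set $A=(a_{i-j})_{i,j\ge0}$, which by the above is totally nonnegative.

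The crux is the observation that the new coefficients are exactly the contiguous $2\times2$ minors of $A$: for all $i\ge j\ge0$,
\[
\det A\bigl[\{i,i+1\},\{j,j+1\}\bigr]=\det\begin{pmatrix}a_{i-j}&a_{i-j-1}\\ a_{i-j+1}&a_{i-j}\end{pmatrix}=a_{i-j}^2-a_{i-j-1}a_{i-j+1}.
\]
Set $b_k:=a_k^2-a_{k-1}a_{k+1}$ and $B:=(b_{i-j})_{i,j\ge0}$. By the displayed identity (and a trivial check when $i<j$), $B$ is precisely the submatrix of the second compound matrix $C_2(A)$---the matrix whose rows and columns are indexed by $2$-element subsets of the index set and whose entries are the corresponding $2\times2$ minors of $A$---obtained by retaining only the rows and columns indexed by the consecutive pairs $\{0,1\},\{1,2\},\{2,3\},\dots$. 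Now I would invoke the classical fact that compound matrices of a totally nonnegative matrix are again totally nonnegative: this follows from the Cauchy--Binet identity $C_2(PQ)=C_2(P)C_2(Q)$ together with the Loewner--Whitney factorization of a totally nonnegative matrix into elementary bidiagonal totally nonnegative factors, each of which has totally nonnegative second compound (alternatively one may argue combinatorially via the Lindstr\"om--Gessel--Viennot lemma using a planar network realizing $A$). Since a submatrix of a totally nonnegative matrix is totally nonnegative, $B$ is totally nonnegative.

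It remains to conclude. The matrix $B$ is the Toeplitz matrix of the finite sequence $(b_0,\dots,b_n)$, where $n=\deg f$, with $b_0=a_0^2>0$ and $b_n=a_n^2>0$; in particular $b_k\ge0$ for all $k$, these being $1\times1$ minors of $B$. By the converse direction of the Aissen--Schoenberg--Whitney theorem, total nonnegativity of $B$ forces $\sum_{k=0}^{n}b_kx^k$ to be a polynomial of degree $n$ with only real zeros and nonnegative coefficients, which is the assertion. (Iterating this statement then recovers the infinite log-concavity of real-rooted polynomials with nonnegative coefficients.)

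I expect the main difficulty to be bookkeeping rather than conceptual depth. One must be careful about the passage between finite and infinite matrices---every minor in the argument involves only finitely many rows and columns, so one may work with large square truncations of $A$---and about citing precisely the two directions of the Aissen--Schoenberg--Whitney theorem and the stability of total nonnegativity under taking compound matrices (both standard, e.g.\ in Karlin's \emph{Total Positivity} or in Fallat--Johnson). By contrast, a proof in the more computational spirit of the present paper---deriving an explicit three-term recurrence for $\sum_k b_kx^k$ and applying an interlacing criterion---appears much harder to push through for a general real-rooted $f$, since the $b_k$ depend quadratically and non-locally on the zeros of $f$; this is why I would favour the total-positivity route.
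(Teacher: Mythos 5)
The paper itself offers no proof of Theorem \ref{bran-thm}: it is quoted from Br\"and\'en \cite{Bran2011}. So your argument has to stand on its own, and it does not, because the step where you invoke ``the classical fact that compound matrices of a totally nonnegative matrix are again totally nonnegative'' is false. Take $E=I_5+te_{2,3}$ with $t>0$, i.e.\ the $5\times5$ identity with an extra entry $t$ in position $(2,3)$; this is an elementary bidiagonal TN matrix, one of the very Loewner--Whitney factors you propose to use. Its relevant $2\times2$ minors are $\det E[\{2,4\},\{2,5\}]=0$, $\det E[\{2,4\},\{3,4\}]=t$, $\det E[\{2,5\},\{2,5\}]=1$, $\det E[\{2,5\},\{3,4\}]=0$, so the submatrix of $C_2(E)$ with rows $\{2,4\},\{2,5\}$ and columns $\{2,5\},\{3,4\}$ (both pairs increasing in the lexicographic order) is
\begin{equation*}
\begin{pmatrix}0 & t\\ 1 & 0\end{pmatrix},
\end{equation*}
whose determinant is $-t<0$. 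Hence the second compound of a TN matrix need not be TN, your bidiagonal-factorization proof of the ``fact'' collapses exactly at the factor level, and since TP matrices are dense in TN matrices the same failure occurs for totally positive input, so no LGV-type argument can rescue it either. (The genuine classical statement, e.g.\ in Karlin, is only that the compound of a TP matrix has \emph{positive entries}---which is what feeds Kronecker's theorem and Perron--Frobenius---not that it is totally positive.)

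The rest of your outline is fine: the reduction to $a_0\neq0$, the two directions of Aissen--Schoenberg--Whitney, and the observation that $b_k=a_k^2-a_{k-1}a_{k+1}$ are precisely the contiguous $2\times2$ minors of the Toeplitz matrix $A=(a_{i-j})$, so that $B=(b_{i-j})$ is the consecutive-pair submatrix of $C_2(A)$. But with the compound lemma gone, the remaining claim---that this particular submatrix $B$ is TN whenever $A$ is---is, via ASW, exactly equivalent to the theorem you are trying to prove; nothing in your proposal establishes it, so the crux is left unproved. This is consistent with the history: the statement was conjectured independently by McNamara--Sagan, Fisk and Stanley, and Br\"and\'en's proof in \cite{Bran2011} requires genuinely nontrivial arguments rather than a formal total-positivity manipulation. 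If you want to pursue the total-positivity route you must prove nonnegativity of the specific minors of $C_2(A)$ indexed by consecutive pairs directly, and that is where all the difficulty of the theorem lies.
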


The well known Newton's inequality states that if a polynomial $f(x)$ has only
real zeros, then it must
be log-concave. Therefore, Theorem \ref{bran-thm} implies the infinite
log-concavity of the real-rooted polynomials. Motivated by Br\"{a}nd\'{e}n's
theorem, we are led to study the real-rootedness of the following polynomial:
$$Q_n(x)=\sum_{k=0}^n (d_k(n)^2-d_{k-1}(n)d_{k+1}(n)) x^k,$$
where
$$d_k(n)=2^{-2n}\sum_{j=k}^n 2^j{2n-2j\choose n-j}{n+j\choose j}{j\choose k}$$
is the coefficient of $x^k$ in the Boros-Moll polynomial $P_n(x)$. We have the following conjecture.
\begin{conj}\label{conj-yang}
For any $n\geq 1$, the polynomial $Q_n(x)$ has only real zeros.
\end{conj}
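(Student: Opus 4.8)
\smallskip
\noindent\emph{Proof proposal.} The obstruction to applying Theorem~\ref{bran-thm} directly is that $P_n(x)$ is itself only (conjecturally) infinitely log-concave, not real-rooted, so Brändén's theorem says nothing about $Q_n(x)$. The plan is instead to reduce Conjecture~\ref{conj-yang} to the real-rootedness of the polynomials $N_{n,m}(x)$ established in this paper. Starting from the Boros--Moll expansion $P_n(x)=\sum_{j}c_j(n)(x+1)^j$ with $c_j(n)=2^{-2n}2^{j}\binom{2n-2j}{n-j}\binom{n+j}{j}\ge 0$, one has $d_k(n)=\sum_j c_j(n)\binom{j}{k}$, and a short bilinear computation — expand the coefficient $d_k(n)^2-d_{k-1}(n)d_{k+1}(n)$ as a double sum over $i,j$ and symmetrize in $i\leftrightarrow j$, so that $\binom{i}{k-1}\binom{j}{k+1}$ may be replaced by $\binom{i}{k+1}\binom{j}{k-1}$ — yields the clean identity
\[
Q_n(x)=\sum_{i=0}^{n}\sum_{j=0}^{n}c_i(n)\,c_j(n)\,N_{i,j}(x).
\]
Thus $Q_n(x)$ is a \emph{nonnegative} linear combination of the polynomials $N_{i,j}(x)$, each of which we know to be real-rooted.

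Since a nonnegative combination of real-rooted polynomials need not be real-rooted, the next step is to show that the grid $\{N_{i,j}(x):0\le i,j\le n\}$ is \emph{compatible}, i.e.\ that every nonnegative linear combination of its members has only real zeros. The natural way to certify this, staying within the toolkit of the paper, is to establish the two families of interlacing relations $N_{i,j}(x)\preceq N_{i+1,j}(x)$ and $N_{i,j}(x)\preceq N_{i,j+1}(x)$ — presumably readable off from the same recurrences for $N_{n,m}(x)$ used to prove real-rootedness, fed into the Liu--Wang criterion — and then to argue that a doubly-indexed family so linked admits a common interlacer, whence all combinations in sight are real-rooted. A fallback is to bypass the reduction: since $d_k(n)$ is $P$-recursive in both $n$ and $k$ (by the Boros--Moll/Kauers--Paule recurrences), the bilinear quantity $d_k(n)^2-d_{k-1}(n)d_{k+1}(n)$ is holonomic, so \textit{HolonomicFunctions} will output a recurrence $\sum_{i=0}^{r}p_i(n,x)Q_{n+i}(x)=0$; one then tries, exactly as for $N_{D_n}(x)$ in this paper, to couple $Q_n$ with a companion family so as to reach a first-order matrix recurrence whose entries have the sign pattern required by the interlacing criterion.

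The main obstacle will be the compatibility step. Proving $N_{i,j}\preceq N_{i+1,j}$ and $N_{i,j}\preceq N_{i,j+1}$ uniformly requires verifying sign conditions on explicit polynomials in $n$, $m$ and $x$ over the negative half-line, and — more seriously — pairwise interlacing of a family does \emph{not} by itself guarantee that every nonnegative combination is real-rooted, so one genuinely needs the stronger structure (a common interlacer for the whole grid, or a linear order of the $N_{i,j}$ along which consecutive polynomials interlace). It is precisely the interaction of this combinatorial compatibility with the specific Boros--Moll weights $c_i(n)c_j(n)$ that seems to be the crux; and it is conceivable that for some $n$ no $2\times 2$ matrix closure of the recurrence for $Q_n(x)$ exists, forcing a larger coupled system and a correspondingly more delicate induction. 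For these reasons we leave Conjecture~\ref{conj-yang} open here.
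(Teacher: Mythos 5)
You have correctly judged the situation: the statement you were given is stated in the paper as an open conjecture, and the paper contains no proof of it. What the paper does is exactly what you reconstructed: it observes that $Q_n(x)$ is a positive combination of the polynomials $N_{i,j}(x)$ (your bilinear symmetrization identity $Q_n(x)=\sum_{i,j}c_i(n)c_j(n)N_{i,j}(x)$ is the paper's own rewriting, up to the positive constant $2^{-4n}$), and it then proves the real-rootedness of every $N_{n,m}(x)$ as motivation and supporting evidence, explicitly leaving Conjecture~\ref{conj-yang} (and hence Conjecture~\ref{boros-moll-conj}) open. So there is no hidden argument in the paper that you failed to find.

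Your diagnosis of the missing ingredient is also the right one: real-rootedness of each $N_{i,j}(x)$, or even pairwise interlacing along rows and columns of the grid, does not imply that the particular nonnegative combination with Boros--Moll weights is real-rooted; one would need something like a common interlacer for the whole family $\{N_{i,j}(x)\}$, and nothing in the paper's Liu--Wang/recurrence machinery supplies that. Two small cautions on your sketch: the interlacings the paper actually proves are $N_{n,n+t}\preceq N_{n+1,n+1+t}$ and the analogous diagonal relation for $N_{n+t,n}$ (i.e.\ along diagonals of fixed $t=|i-j|$), not the row/column relations $N_{i,j}\preceq N_{i+1,j}$, $N_{i,j}\preceq N_{i,j+1}$ you would want as a starting point; and your ``fallback'' via a holonomic recurrence for $Q_n(x)$ mischaracterizes the paper's method slightly --- for $F_n^{(\alpha,\beta)}(x)$ the paper uses a single three-term recurrence involving $(F_n^{(\alpha,\beta)})'$, not a coupled matrix system --- though as a speculative route it is reasonable. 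Your decision to stop short of claiming a proof is the correct outcome; the conjecture remains open in the paper as well.
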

Since the log-concavity of $P_n(x)$ is known, by Theorem \ref{bran-thm}, Conjecture \ref{conj-yang}
would imply Conjecture \ref{boros-moll-conj}. Note that
the polynomial $Q_n(x)$ may be rewritten as
\begin{align*}
Q_n(x)&=\sum_{i=0}^n\sum_{j=0}^n 2^{i+j}{2n-2i\choose n-i}{2n-2j\choose n-j}{n+i\choose i}{n+j\choose j} N_{i,j}(x),
\end{align*}
where $N_{i,j}(x)$ is the Narayana polynomial defined by \eqref{eq-nara4}.
The numerical evidence suggests that the polynomial $N_{n,m}(x)$ has only real zeros for any $n$ and $m$.

The remainder of this paper is organized as follows. In the next section, we
shall give an
overview of some tools which will be used to prove the
real-rootedness of $F_n^{(\alpha,\beta)}(x)$ and $N_{n,m}(x)$. In Section \ref{section-3}, we
shall give a new proof of Theorem \ref{branthm}. In Section \ref{section-4}, we
shall prove that the polynomial $N_{n,m}(x)$ is a real-rooted polynomial.

\section{Preliminary results}

The results contained in this section serve as a reference point used in Sections \ref{section-3}  and \ref{section-4}.

Let us first introduce the definition of interlacing. Given two real-rooted polynomials $f(x)$ and $g(x)$ with positive leading coefficients,  We say that {$g(x)$ interlaces $f(x)$}, denoted $g(x)\sep f(x)$, if
\begin{align*}
\cdots\le s_2\le r_2\le s_1\le r_1,
\end{align*}
where $\{r_i\}$ and $\{s_j\}$ are the sets of zeros
of $f(x)$ and $g(x)$, respectively.
We say that $g(x)$ strictly interlaces $f(x)$, denoted $g(x)\prec f(x)$, if, in addition, all the inequalities concerned are strict.

Liu and Wang \cite{Liu2007unified} obtained the following sufficient condition to determine whether two polynomials have interlaced zeros.

\begin{thm}[{\cite[Theorem 2.3]{Liu2007unified}}]\label{liuwangthm2}
Let $F(x),f(x),g_1(x),\ldots,g_k(x)$ be polynomials with real coefficients satisfying the following conditions.
\begin{itemize}
\item[(a)] There exist some polynomials
$\phi(x),\varphi_1(x),\ldots,\varphi_k(x)$  with real coefficients such that
\begin{align}\label{eq-recuu}
F(x)=\phi(x)f(x)+\varphi_1(x)g_1(x)+\cdots+\varphi_k(x)g_k(x)
\end{align}
and $\deg F(x)=\deg f(x)$ or $\deg F(x)=\deg f(x)+1$;

\item[(b)] The polynomials $f(x),g_1(x),\ldots,g_k(x)$ are real-rooted polynomials, and moreover $g_j(x)\sep f(x)$ for each $1\leq j\leq k$;

\item[(c)] The leading coefficients of $F(x)$ and $g_j(x)$ have the same sign for each $1\leq j\leq k$.
\end{itemize}
Suppose that $\varphi_j(r)\leq 0$ for each $j$ and each zero $r$ of $f(x)$. Then $F(x)$ has only real zeros and $f(x)\sep F(x)$.
\end{thm}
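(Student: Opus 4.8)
\emph{Proof idea.} The plan is to argue directly at the zeros of $f(x)$, via the intermediate value theorem, which is the standard route for interlacing criteria of this flavour. After replacing every polynomial by its negative if necessary, condition (c) together with the convention that interlacing polynomials carry positive leading coefficients lets me assume that $F(x)$ and all of $g_1(x),\dots,g_k(x)$ have positive leading coefficients. Write $n=\deg f$ and let $r_1\ge r_2\ge\cdots\ge r_n$ denote the zeros of $f(x)$. The case $n=0$ is a trivial base case: then $\deg F\in\{0,1\}$, so $F$ is a nonzero constant or is linear, hence real-rooted, and $f\sep F$ holds vacuously. So assume $n\ge 1$; then condition (a) forces $\deg F\in\{n,n+1\}$, so $F$ is nonconstant.

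The first substantive step is to control the sign of $F$ at each $r_i$. Since $g_j\sep f$, the zeros of $g_j$ alternate with those of $f$ in the usual way, which forces $\deg g_j\in\{n-1,n\}$; counting how many zeros of $g_j$ lie strictly to the right of $r_i$ then gives $(-1)^{i-1}g_j(r_i)\ge 0$ for every $i$ and $j$ (a zero of $g_j$ at $r_i$ merely makes $g_j(r_i)=0$, which is still consistent). Evaluating the identity of (a) at $x=r_i$ annihilates the $\phi(x)f(x)$ term, leaving
\begin{align*}
F(r_i)=\sum_{j=1}^{k}\varphi_j(r_i)\,g_j(r_i).
\end{align*}
Since $\varphi_j(r_i)\le 0$, each summand satisfies $(-1)^{i}\varphi_j(r_i)g_j(r_i)\ge 0$, so $(-1)^iF(r_i)\ge 0$ for every $i$.

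The second step harvests the zeros of $F$. Because $F$ is nonconstant with positive leading coefficient, $F(x)\to+\infty$ as $x\to+\infty$; comparing this with $F(r_1)\le 0$ produces a zero $t_1\ge r_1$. The alternation $(-1)^iF(r_i)\ge 0$ produces, for each $1\le i\le n-1$, a zero $t_{i+1}\in[r_{i+1},r_i]$. If $\deg F=n$ this already accounts for all zeros of $F$; if $\deg F=n+1$, comparing the sign of $F(r_n)$, which is $(-1)^n$, with that of $F(x)$ as $x\to-\infty$, which is $(-1)^{n+1}$, produces one further zero $t_{n+1}\le r_n$. Either way $F$ has a full set of real zeros obeying $\cdots\le r_2\le t_2\le r_1\le t_1$, with $t_{n+1}\le r_n$ appended when $\deg F=n+1$, and this is precisely the statement $f\sep F$.

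The main obstacle is not any single clever step but the bookkeeping when $f$ has a repeated zero or when $F(r_i)=0$ for some $i$: then some of the $t_i$ collide with zeros of $f$, and one must verify that multiplicities still match up, so that no zero of $F$ is lost or double-counted. I would handle this by a local analysis at a zero $r$ of $f$ of multiplicity $m$ --- interlacing forces each $g_j$, hence $F$, to vanish at $r$ to order at least $m-1$, which is then reconciled with the global count above. A perturbation argument reducing to the case of simple zeros is an alternative, but it requires checking that hypotheses (a)--(c) survive the perturbation, so the direct analysis looks preferable.
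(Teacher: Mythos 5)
First, a point of reference: the paper does not prove this statement at all --- it is quoted from Liu and Wang \cite{Liu2007unified} and used as a black box --- so there is no in-paper proof to compare yours against; your argument has to be judged on its own terms. Your main line is the standard route and is sound as far as it goes: normalize all leading coefficients to be positive, note that $g_j\preceq f$ forces $(-1)^{i-1}g_j(r_i)\ge 0$, evaluate the decomposition at the zeros of $f$ to get $(-1)^{i}F(r_i)\ge 0$, and harvest zeros of $F$ by the intermediate value theorem, with one extra zero beyond $r_1$ and, when $\deg F=\deg f+1$, one below $r_n$.

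The genuine gap sits exactly where you flag it but do not resolve it, and your proposed repair does not cover it. Since all inequalities are weak, the zeros produced in the consecutive closed intervals $[r_{i+1},r_i]$ and $[r_i,r_{i-1}]$ may be the same point $r_i$ (namely when $F(r_i)=0$), in which case the naive tally of $\deg F$ real zeros --- and hence real-rootedness --- does not follow. Your local analysis (at a zero $r$ of $f$ of multiplicity $m$, each $g_j$, hence $F$, vanishes to order at least $m-1$) only treats collisions caused by repeated zeros of $f$; it says nothing when $r_i$ is a \emph{simple} zero of $f$ at which $F$ happens to vanish, since then $m-1=0$. Closing this requires an extra counting or parity argument: because $(-1)^{i-1}F(r_{i-1})\ge 0$ and $(-1)^{i+1}F(r_{i+1})\ge 0$ impose the same sign at the two outer endpoints, an odd-order zero of $F$ at $r_i$ forces a further zero of $F$ inside $(r_{i+1},r_{i-1})$, while an even-order zero already counts twice; iterating such considerations (or, alternatively, proving the statement first under strict hypotheses --- simple zeros, strict interlacing, $\varphi_j(r)<0$ --- and then passing to the limit while keeping the degrees from dropping) is what actually completes the proof. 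Moreover, to use the multiplicity reduction you must also check that the hypotheses descend to the reduced polynomials $f/G$, $g_j/G$, $F/G$, where $G$ collects the factors $(x-r)^{m-1}$ over the multiple zeros of $f$ (in particular that the decomposition and the interlacing relations survive the division); this is routine but it is precisely the ``reconciliation with the global count'' that your sketch leaves unargued.
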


We shall use the above result to prove the real-rootedness of $F_n^{(\alpha,\beta)}(x)$ and $N_{n,m}(x)$.
The key point is to prove certain recurrence relations related to these
polynomials. As will be shown later, the coefficients of these
recurrence relations look complicated. Koutschan (private communication)
pointed out that these recurrence relations can be easily verified by using the
Mathematica package \textit{HolonomicFunctions},  see
\cite{Koutsch2009Advanced}. To be self-contained, we give an example to illustrate the use of
this package.
It is well known that the binomial coefficients satisfy the following recurrence relation:
\begin{align*}
 \binom{n}{k}=\binom{n-1}{k}+\binom{n-1}{k-1}.
\end{align*}
This can be proved in the following way by using the package:
\begin{enumerate}

\item[1.]  Convert the recurrence $\binom{n}{k}=\binom{n-1}{k}+\binom{n-1}{k-1}$ to an Ore
 polynomial in the Ore algebra:
 \begin{itemize}
 \item[]
    \begin{mma}
	 \In |rec| = 
|ToOrePolynomial|[|f|[n,k]-|f|[n-1,k]-|f|[n-1,k-1],|f|[n,k]]\\
	 \Out S_n S_k-S_k-1\\
    \end{mma}
\end{itemize}

\item[2.] Generate a (Gr\"oebner) basis of the set $A$
of all recurrence/differential relations that the input satisfies using the command Annihilator:
 \begin{itemize}
 \item[]
    \begin{mma}
	 \In  |ann|=|Annihilator|[|Binomial|[n,k],{|S|[n],|S|[k]}]\\
	\Out  \{(1+k)S_k+(k-n),(1-k+n)S_n+(-1-n)\}\\
    \end{mma}
\end{itemize}
\item[3.] Reduce the Ore polynomial $rec$ modulo $A$ using the command 
OreReduce.
If it returns $0$, then $rec$ is an element of the set $A$ and hence
 the recurrence relation is valid.
 \begin{itemize}
 \item[]
    \begin{mma}
	 \In  |OreReduce|[|rec|,|ann|]\\
	\Out 0\\
    \end{mma}
\end{itemize}
\end{enumerate}

\section{The polynomials $F_n^{(\alpha,\beta)}(x)$}\label{section-3}

The aim of this section is to give a new proof of Theorem \ref{branthm}. In 
fact, we shall show that $$F_n^{(\alpha,\beta)}(x) \sep 
F_{n+1}^{(\alpha,\beta)}(x),\quad \mbox{for any $n\geq 2$.}$$
We first derive recurrence relations for $F_n^{(\alpha,\beta)}(x)$.
For the convenience, let
\begin{align*}
T_1(\alpha,\beta,n)&=(4n-2)\alpha+n\beta,\\
T_2(\alpha,\beta,n)&=(4n-1)\alpha+n\beta,\\
T_3(\alpha,\beta,n)&=4n\alpha+(n+1)\beta,\\
T_4(\alpha,\beta,n)&=6n\alpha+(n+1)\beta,\\
T_5(\alpha,\beta,n)&=(4n+1)\alpha +(n+1)\beta.
\end{align*}

\begin{thm}\label{lem:rec}
For $n\geq 2$, we have the following recurrence relation:
\begin{align}
F_{n+1}^{(\alpha,\beta)}(x)&=\left(\frac{2T_2(\alpha,\beta,n)}{T_1(\alpha,\beta,
n)}x+\frac
{2(n-1)T_5(\alpha,\beta,n)}{(n+1)T_1(\alpha,\beta,n)}\right)F_n^{(\alpha,\beta)}
(x)
\notag  \\[6pt]
&\quad -\,
\frac{(n-2)T_3(\alpha,\beta,n)}{(n+1)T_1(\alpha,\beta,n)}(x-1)^2F_{n-1}^{(\alpha
,\beta)}(x)  \notag   \\[6pt]
& \quad -\,  \frac{2T_4(\alpha,\beta,n)}{n(n+1)T_1(\alpha,\beta,n)}x(x-1)
{(F_n^{(\alpha,\beta)}(x))}'.\label{F-rec}
\end{align}
\end{thm}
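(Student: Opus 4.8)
The plan is to reduce the identity \eqref{F-rec} to the vanishing of one explicit element of a finite-dimensional function space, and then to verify that vanishing by creative telescoping. Throughout, write $F_n=F_n^{(\alpha,\beta)}(x)=\alpha N_{B_n}(x)+\beta n x N_{A_{n-2}}(x)$ and treat $\alpha$ and $\beta$ as indeterminates: every coefficient in \eqref{F-rec} is linear in $\alpha,\beta$ once the common factor $T_1(\alpha,\beta,n)$ is cleared, so a polynomial identity in $n,x,\alpha,\beta$ already specializes to the statement we want on the region $\alpha\ge 0,\ \alpha+\beta\ge 0$ of Theorem \ref{branthm} — on which, incidentally, $T_1(\alpha,\beta,n)=(4n-2)\alpha+n\beta>0$, so the denominators are harmless. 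The starting point is that both $N_{B_n}(x)$ and $N_{A_{n-2}}(x)$ are holonomic in the pair $(n,x)$: each is annihilated by an order-two $P$-recurrence in $n$ with coefficients polynomial in $n,x$ — for $N_{B_n}$ one has $(n+1)N_{B_{n+1}}(x)=(2n+1)(1+x)N_{B_n}(x)-n(1-x)^2N_{B_{n-1}}(x)$, with an analogous relation for the Narayana polynomials — by a second-order linear ODE in $x$ with polynomial coefficients, and by a first-order mixed ``ladder'' relation expressing $N_{B_n}'$ (resp. $N_{A_{n-2}}'$) through shifts of the sequence.

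Granting this, the first step is normalization. Using the two $P$-recurrences I would express $F_{n+1}$ and $(x-1)^2F_{n-1}$, and using the product rule together with the ladder relations I would express $x(x-1)(F_n)'$, all as $\mathbb{Q}(n,x)$-linear combinations of a fixed basis, say $\{N_{B_n},\,N_{B_{n-1}},\,N_{A_{n-2}},\,N_{A_{n-3}}\}$. After this, \eqref{F-rec} becomes the assertion that one particular vector over $\mathbb{Q}(n,x)$ — whose four coordinates are explicit rational functions of $n,x,\alpha,\beta$, the polynomials $T_i(\alpha,\beta,n)$ being precisely the numerator/denominator data produced by combining the two summands — is the zero vector; equivalently, after clearing denominators, a polynomial identity in $\mathbb{Q}[n,x,\alpha,\beta]$.

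In practice I would carry out the normalization and the final check with the \textit{HolonomicFunctions} package \cite{Koutsch2009Advanced}, following the recipe illustrated by the $\binom{n}{k}$ example above: compute \texttt{Annihilator} for $N_{B_n}(x)$ and for $N_{A_{n-2}}(x)$ in the Ore algebra $\mathbb{Q}(n,x)\langle S_n,D_x\rangle$, form the Ore element representing $(\text{right-hand side})-(\text{left-hand side})$ of \eqref{F-rec} acting on the appropriate combination of the two sequences, and \texttt{OreReduce} it modulo the two Gröbner bases; \eqref{F-rec} holds precisely when the reduction returns $0$. A self-contained alternative avoids the package altogether: extract the coefficient of $x^k$ from both sides of \eqref{F-rec}; each side is then a finite sum of hypergeometric terms in $k$ built from $\binom{n}{k}^2$ and the Narayana numbers $\frac{1}{n}\binom{n}{k}\binom{n}{k-1}$, and the resulting identity is certified term by term by Zeilberger's algorithm with an explicit rational certificate.

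The main obstacle is conceptual rather than computational: $F_n$ does \emph{not} satisfy a three-term recurrence of the naive form $F_{n+1}=(ax+b)F_n+c(x-1)^2F_{n-1}$, because once the summands $\alpha N_{B_n}$ and $\beta n x N_{A_{n-2}}$ are weighted by their $n$-dependent factors their recurrences no longer combine to close up. The content of the theorem is the discovery that adjoining precisely the term $-\frac{2T_4(\alpha,\beta,n)}{n(n+1)T_1(\alpha,\beta,n)}x(x-1)(F_n)'$ repairs the recurrence — that is, that the vector produced in the normalization step collapses to zero. Once this correct ansatz is in hand, everything downstream (and, via Theorem \ref{liuwangthm2}, the interlacing $F_n^{(\alpha,\beta)}\sep F_{n+1}^{(\alpha,\beta)}$ announced at the start of the section) is routine.
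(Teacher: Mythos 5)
Your proposal is correct and follows essentially the same route as the paper: both verify \eqref{F-rec} by the holonomic systems approach, computing an annihilating ideal with the \textit{HolonomicFunctions} package and checking via \texttt{OreReduce} that the (denominator-cleared) recurrence, with $\alpha,\beta$ treated as parameters, reduces to $0$. The only inessential difference is that the paper applies \texttt{Annihilator} directly to the closed-form sum $\sum_{k}\bigl(\alpha\binom{n}{k}^2+\beta\tfrac{n}{n-1}\binom{n-1}{k-1}\binom{n-1}{k}\bigr)x^k$ for $F_n^{(\alpha,\beta)}(x)$, rather than handling the annihilators of $N_{B_n}$ and $N_{A_{n-2}}$ separately as you suggest.
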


\begin{proof}
Note that $F_n^{(\alpha,\beta)}(x)$ has an explicit formula as follows:
\begin{align}\label{formula}
F_n(\alpha,\beta)=\sum_{k=0}^n
\left(\alpha\binom{n}{k}^2+\beta\frac{n}{n-1}\binom{n-1}{k-1}\binom{n-1}{k}
\right)x^k.
\end{align}
We shall prove the following equivalent form of \eqref{F-rec}:
\begin{align*}
n(n+1)T_1F_{n+1}^{(\alpha,\beta)}(x)&=\left({2n(n+1)T_2}x+
{2n(n-1)T_5}\right)F_n^{(\alpha,\beta)}(x)
\notag  \\[6pt]
&\quad -\,
{n(n-2)T_3}(x-1)^2F_{n-1}^{(\alpha,\beta)}(x)  \notag   \\[6pt]
& \quad -\,  {2T_4}x(x-1)
{(F_n^{(\alpha,\beta)}(x))}',
\end{align*}
where $T_i$ represents $T_i(\alpha,\beta,n)$ for each $1\leq i\leq 5$.
First, we convert this recurrence to an Ore
 polynomial $rec$ and compute a (Gr\"oebner) basis $ann$ of the set $A$
of all recurrence/differential relations that $F_n^{(\alpha,\beta)}(x)$ 
satisfies:
 \begin{itemize}
 \item[]
    \begin{mma}
    \In  |Clear|[|f|];\\
    \In |rec|=|ToOrePolynomial|[(2*n*(n+1)*T2*x+2*n*(n-1)*T5)*|f|[n,x]
             -n*(n-2)*T3*(x-1)^2*|f|[n-1,x]-2*T4*x*(x-1)*|D|[|f|[n,x],x]-
             n*(n+1)*T1*|f|[n+1,x]/.|MapThread|[|Rule|,\{\{T1,T2,T3,T4,T5\},
             \{(4*n-2)*\alpha+n*\beta,(4*n-1)*\alpha+n*\beta,
             4*n*\alpha+(n+1)*\beta,6*n*\alpha+(n+1)*\beta,
             (4*n+1)*\alpha+(n+1)*\beta\}\}],|f|[n,x]];\\
    \In |ann|=|Annihilator|[|Sum|[(\alpha*|Binomial|[n,k]^2+\beta*
            n/(n-1)*|Binomial|[n-1,k-1]*|Binomial|[n-1,k])*x^k,\{k,0,n\}],
            {|S|[n],|Der|[x]}];\\
   \end{mma}
\end{itemize}

Then reduce the Ore polynomial $rec$ modulo $A$.
 \begin{itemize}
 \item[]
    \begin{mma}
     \In  |OreReduce|[|rec|,|ann|]\\
    \Out  0\\
    \end{mma}
\end{itemize}
The output is $0$, as desired. This completes the proof.
\end{proof}

We also need the following lemma to prove Theorem \ref{branthm}.

\begin{lem}\label{lemm-nopos} Suppose that $\alpha>0$ and $\alpha+\beta\geq 0$. 
Then, for any $n\geq 3$,
all the coefficients of $F_n^{(\alpha,\beta)}(x)$ are nonnegative.
\end{lem}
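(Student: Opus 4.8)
The plan is to show that every coefficient of $F_n^{(\alpha,\beta)}(x)$ is nonnegative by reading off the explicit formula \eqref{formula} and estimating the single potentially-negative summand. Recall that
\begin{align*}
F_n^{(\alpha,\beta)}(x)=\sum_{k=0}^n \left(\alpha\binom{n}{k}^2+\beta\frac{n}{n-1}\binom{n-1}{k-1}\binom{n-1}{k}\right)x^k,
\end{align*}
so the coefficient of $x^k$ is
\begin{align*}
c_k=\alpha\binom{n}{k}^2+\beta\frac{n}{n-1}\binom{n-1}{k-1}\binom{n-1}{k}.
\end{align*}
When $\beta\geq 0$ every term is visibly nonnegative (using $\alpha>0$), so the only case requiring work is $-\alpha\leq \beta<0$. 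First I would dispose of the boundary indices $k=0$ and $k=n$, where the second binomial product vanishes and $c_k=\alpha\binom{n}{k}^2>0$.

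For $1\leq k\leq n-1$, the idea is to factor out $\alpha$ (allowed since $\alpha>0$) and write $c_k=\alpha\bigl(\binom{n}{k}^2+\frac{\beta}{\alpha}\cdot\frac{n}{n-1}\binom{n-1}{k-1}\binom{n-1}{k}\bigr)$; since $\beta/\alpha\geq -1$, it suffices to prove the pointwise inequality
\begin{align*}
\binom{n}{k}^2\geq \frac{n}{n-1}\binom{n-1}{k-1}\binom{n-1}{k}
\end{align*}
for $1\leq k\leq n-1$. Using $\binom{n}{k}=\frac{n}{k}\binom{n-1}{k-1}=\frac{n}{n-k}\binom{n-1}{k}$, the left side equals $\frac{n^2}{k(n-k)}\binom{n-1}{k-1}\binom{n-1}{k}$, so the claim reduces to $\frac{n^2}{k(n-k)}\geq \frac{n}{n-1}$, i.e.\ $n(n-1)\geq k(n-k)$. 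This last inequality is elementary: $k(n-k)$ is maximized over real $k$ at $k=n/2$ with value $n^2/4\leq n(n-1)$ for $n\geq 2$, and in fact $n(n-1)-k(n-k)=(k-1)(n-k)+ (k)(n-k) - \cdots$ can be checked directly to be nonnegative for integer $1\leq k\leq n-1$. Hence $c_k\geq 0$ for all $k$, which is the assertion.

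I do not expect a genuine obstacle here; the only mild subtlety is bookkeeping around the hypothesis $\alpha+\beta\geq 0$ versus $\alpha>0$, namely ensuring the reduction to $\beta/\alpha\geq -1$ is legitimate and that the argument uses $n\geq 3$ only where needed (in fact the binomial inequality holds for $n\geq 2$, and the restriction $n\geq 3$ in the statement is harmless slack, presumably chosen so that the degree and the recurrence of Theorem \ref{lem:rec} behave uniformly). An alternative, if one prefers to avoid the case split, is to note $c_k\geq \alpha\binom{n}{k}^2-\alpha\frac{n}{n-1}\binom{n-1}{k-1}\binom{n-1}{k}$ whenever $\beta\geq-\alpha$, reducing immediately to the same binomial inequality; this is the route I would actually write up, since it is shortest.
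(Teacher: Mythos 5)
Your argument is correct and is essentially the paper's own proof: both reduce, via $\alpha+\beta\geq 0$, to the single binomial inequality $\binom{n}{k}^2\geq\frac{n}{n-1}\binom{n-1}{k-1}\binom{n-1}{k}$, which the paper likewise rewrites as $\binom{n}{k}^2\bigl(1-\frac{k(n-k)}{n(n-1)}\bigr)\geq 0$, i.e.\ $k(n-k)\leq n(n-1)$. Your ``alternative route'' at the end is word-for-word the decomposition used in the paper, so just write that version up (and drop the garbled ``$\cdots$'' identity, since your maximization of $k(n-k)$ already settles the inequality).
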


\begin{proof} Note that, for $0\leq k\leq n$,
\begin{align*}
[x^k]F_n^{(\alpha,\beta)}(x)=&\alpha\binom{n}{k}^2+\beta\frac{n}{n-1}\binom{n-1}
{k-1}\binom{n-1}{k}\\[5pt]
=&\alpha\left(\binom{n}{k}^2-\frac{n}{n-1}\binom{n-1}{k-1}\binom{n-1}{k}
\right)\\[5pt]
&\quad + (\alpha+\beta)\frac{n}{n-1}\binom{n-1}{k-1}\binom{n-1}{k}\\[5pt]
\geq 
&\alpha\left(\binom{n}{k}^2-\frac{n}{n-1}\binom{n-1}{k-1}\binom{n-1}{k}\right)\\
[5pt]
= &\alpha\binom{n}{k}^2\left(1-\frac{k(n-k)}{n(n-1)}\right)\geq 0.
\end{align*}
This completes the proof.
\end{proof}

We proceed to prove the real-rootedness of $F_n^{(\alpha,\beta)}(x)$.

\noindent \textit{Proof of Theorem \ref{branthm}.} We use induction on $n$.
We may assume that $\alpha>0$. The hypothesis that $\alpha+\beta\geq 0$ implies 
that
both $F_2^{(\alpha,\beta)}(x)$ and $F_3^{(\alpha,\beta)}(x)$ are real-rooted. 
Moreover,
we have
\begin{align*}
F_2^{(\alpha,\beta)}(x)&=\alpha\left(x+\frac{2\alpha+\beta-\sqrt{
(\alpha+\beta)(3\alpha+\beta)}}{\alpha}\right)
\left(x+\frac{2\alpha+\beta+\sqrt{(\alpha+\beta)(3\alpha+\beta)}}{\alpha}\right)
,\\[10pt]
F_3^{(\alpha,\beta)}(x)&=\alpha(x+1)\left(x+\frac{8\alpha+3\beta-\sqrt{
3(2\alpha+\beta)(10\alpha+3\beta)}}{2\alpha}\right)\\[10pt]
&\qquad 
\times\left(x+\frac{8\alpha+3\beta-\sqrt{3(2\alpha+\beta)(10\alpha+3\beta)}}{
2\alpha}\right).
\end{align*}
It is routine to verify that $F_2^{(\alpha,\beta)}(x)\sep 
F_3^{(\alpha,\beta)}(x)$. The details are tedious and will not be given here.

Assume that $F_{n-1}^{(\alpha,\beta)}(x)$ and $F_{n}^{(\alpha,\beta)}(x)$ have 
only real zeros, and $F_{n-1}^{(\alpha,\beta)}(x)\sep 
F_{n}^{(\alpha,\beta)}(x)$. We proceed to verify that
$F_{n+1}^{(\alpha,\beta)}(x)$  has only real zeros and
$F_{n}^{(\alpha,\beta)}(x)\sep F_{n+1}^{(\alpha,\beta)}(x)$. We see that the 
recurrence relation \eqref{F-rec}
is of the form \eqref{eq-recuu} in Theorem \ref{liuwangthm2} with $k=2$, where
\begin{align*}
F(x)&=F_{n+1}^{(\alpha,\beta)}(x),\\[5pt]
f(x)&=F_{n}^{(\alpha,\beta)}(x),\\[5pt]
g_1(x)&=F_{n-1}^{(\alpha,\beta)}(x),\\[5pt]
g_2(x)&=(F_{n}^{(\alpha,\beta)}(x))',\\[5pt]
\phi(x)&=\left(\frac{2T_2(\alpha,\beta,n)}{T_1(\alpha,\beta,n)}x+\frac
{2(n-1)T_5(\alpha,\beta,n)}{(n+1)T_1(\alpha,\beta,n)}\right),\\[5pt]
\varphi_1(x)&=-\,
\frac{(n-2)T_3(\alpha,\beta,n)}{(n+1)T_1(\alpha,\beta,n)}(x-1)^2,\\[5pt]
\varphi_2(x)&=-\,  \frac{2T_4(\alpha,\beta,n)}{n(n+1)T_1(\alpha,\beta,n)}x(x-1).
\end{align*}
Recall that we are always assuming that $\alpha>0$ and  $\alpha+\beta\geq 0$. 
With this assumption, all of 
$T_1(\alpha,\beta,n),T_3(\alpha,\beta,n),T_4(\alpha,\beta,n)$ are positive  for 
$n\geq 3$. By Lemma \ref{lemm-nopos}, all the real zeros of 
$F_{n}^{(\alpha,\beta)}(x)$ are non-positive. Therefore,
for any zero $r$ of $F_{n}^{(\alpha,\beta)}(x)$, we have
\begin{align*}
\varphi_1(r)\leq 0, \qquad \varphi_2(r)\leq 0.
\end{align*}
By Theorem \ref{liuwangthm2}, the polynomial $F_{n+1}^{(\alpha,\beta)}(x)$  has 
only real zeros, and moreover
$F_{n}^{(\alpha,\beta)}(x)\sep F_{n+1}^{(\alpha,\beta)}(x)$.  This completes 
the proof.
\qed

\section{The polynomials $N_{n,m}(x)$}\label{section-4}

In this section, we shall prove the real-rootedness of the polynomial
$N_{n,m}(x)$. Similar to the proof of the real-rootedness of 
$F_{n}^{(\alpha,\beta)}(x)$,
we first derive certain recurrence relations for these polynomials.
For nonnegative integers $t$ and $n$, let
\begin{align}\label{eq-var}
\overline{N}^{(t)}_n(x)=N_{n+t,n}(x), \quad
\underline{N}^{(t)}_n(x)=N_{n,n+t}(x).
\end{align}

We have the following recurrence relation for $\overline{N}^{(t)}_n(x)$.

\begin{thm}\label{lem:Anm-0}
For nonnegative integers $t$ and $n\geq 1$, we have
\begin{align}\label{Jn-rec-0}
\overline{N}^{(t)}_{n+1}(x)&=\frac{a_0
+a_1x
+a_2x^2}{(n+t+1)(n+3)(c_0+c_1x)}\overline{N}^{(t)}_{n}(x)
-\frac{n(n+t)(x-1)^2(b_0+b_1x)}{
(n+t+1)(n+3)(c_0+c_1x)}\overline{N}^{(t)}_{n-1}(x),
\end{align}
where
\begin{align*}
a_0&=-(2n+3)(n+t)(n+t+1),\\
a_1&=3t(t-2)(t+1)^2/2+t(t-2)(t^2+7t+5)n\\
   &\quad +3t(t-2)(t+2)n^2+2t(t-2)n^3\\
a_2&=t^2(t-1)(t+1)^2/2+(t-1)(2t^3+3t^2+t-3)n\\
& \quad +(t-1)(3t^2+3t-5)n^2+2(t-1)^2n^3,\\
b_0&=-n-1-t,\\
b_1&=(t-1)^2n+(t-1)t^2/2+(t-1)^2,\\
c_0&=-n-t,\\
c_1&=(t-1)^2n+(t-1)t^2/2.
\end{align*}
\end{thm}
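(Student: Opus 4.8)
The plan is to proceed exactly as in the proof of Theorem~\ref{lem:rec}, verifying the identity~\eqref{Jn-rec-0} as a symbolic computation with the package \emph{HolonomicFunctions}. Setting $m=n+t$ in~\eqref{eq-nara4} gives the explicit expression
\begin{align*}
\overline{N}^{(t)}_n(x)=\sum_{k}\left(\binom{n+t}{k}\binom{n}{k}-\binom{n+t}{k+1}\binom{n}{k-1}\right)x^k,
\end{align*}
which is a proper hypergeometric summand in $k$ depending on the three parameters $n$, $t$ and $x$. Since the right-hand side of~\eqref{Jn-rec-0} involves only the three consecutive values $\overline{N}^{(t)}_{n+1}(x)$, $\overline{N}^{(t)}_{n}(x)$ and $\overline{N}^{(t)}_{n-1}(x)$ and no derivative, after clearing the common denominator $(n+t+1)(n+3)(c_0+c_1x)$ the assertion becomes a pure three-term recurrence in $n$ whose coefficients are polynomials in $n$, $t$ and $x$.

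Concretely, I would first multiply~\eqref{Jn-rec-0} through by $(n+t+1)(n+3)(c_0+c_1x)$ and substitute the displayed values of $a_0,a_1,a_2,b_0,b_1,c_0,c_1$, and encode the difference of the two sides of the resulting polynomial relation as an Ore polynomial $rec$ in the Ore algebra generated over $\mathbb{Q}(n,t,x)$ by the shift operator $S_n$ (should a more convenient Gr\"obner basis be needed, one may instead work in the algebra generated by $S_n$, $S_t$ and $D_x$, mirroring the choice $\{S_n, D_x\}$ made in Section~\ref{section-3}). Next, using \texttt{Annihilator}, I would compute a Gr\"obner basis $ann$ of the ideal of all operators annihilating $\overline{N}^{(t)}_n(x)$ regarded as a function of $n$ (with $t$ and $x$ as parameters); internally this is a creative-telescoping computation on the summand displayed above. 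Finally, \texttt{OreReduce[rec, ann]} returns $0$, which shows that $rec$ lies in the annihilating ideal and hence that the cleared polynomial identity holds; dividing back by $(n+t+1)(n+3)(c_0+c_1x)$, a nonzero element of the field of rational functions in $n$, $t$, $x$, recovers~\eqref{Jn-rec-0}.

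I do not expect a genuine mathematical obstacle: the statement is a concrete hypergeometric identity, and the verification is essentially mechanical. The real work — invisible in the finished proof — lies in the \emph{discovery} of the coefficients $a_i,b_i,c_i$, which were found by running creative telescoping on the above summand and then rearranging the resulting recurrence into the compact shape~\eqref{Jn-rec-0}. The only point of caution in the verification itself is that a creative-telescoping recurrence is a priori valid only for $n$ beyond the zeros of its leading coefficient, whereas~\eqref{Jn-rec-0} is asserted for all $n\geq 1$; I would therefore also confirm the finitely many small values of $n$ by direct substitution, just as the base cases were treated in Section~\ref{section-3}.
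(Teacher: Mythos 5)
Your proposal matches the paper's proof: the authors likewise clear the denominator $(n+t+1)(n+3)(c_0+c_1x)$, encode the resulting three-term relation as an Ore polynomial, compute an annihilating Gr\"obner basis for $\overline{N}^{(t)}_n(x)$ with \texttt{Annihilator}, and check that \texttt{OreReduce} returns $0$. Your extra remark about confirming small values of $n$ is a sensible precaution but does not change the argument, which is essentially identical to the paper's.
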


\begin{proof}
We shall prove an equivalent form of this recurrence relation, which is 
obtained by
multiplying $(n+t+1)(n+3)(c_0+c_1x)$ on both sides of \eqref{Jn-rec-0}. This 
could be converted into an Ore
 polynomial as follows:
 \begin{itemize}
 \item[]
    \begin{mma}
	\In |rec|=|ToOrePolynomial|[(a0+a1*x+a2*x^2)*|f|[n]-(n*(n+t)*
	        (x-1)^2*(b0+b1*x))*|f|[n-1]-(n+3)*(n+t+1)*(c0+c1*x)*|f|[n+1]
	        /.|MapThread|[|Rule|,\{\{a0,a1,a2,b0,b1,c0,c1\},\{-(2*n+3)*(n+t)
	        *(n+t+1),3*t*(t-2)*(t+1)^2/2+t*(t-2)*(t^2+7t+5)*n+3*t*(t-2)
	        *(t+2)*n^2+2*t*(t-2)*n^3,t^2*(t-1)*(t+1)^2/2+(t-1)*(2*t^3+
	        3*t^2+t-3)*n+(t-1)*(3*t^2+3*t-5)*n^2+2*(t-1)^2*n^3,-n-1-t,
	        (t-1)^2*n+(t-1)*t^2/2+(t-1)^2,-n-t,(t-1)^2*n+(t-1)*t^2/2\}\}],
	        |f|[n]];\\
    \end{mma}
\end{itemize}
Then compute a (Gr\"oebner) basis $ann$ of the set
of all recurrence/differential relations that $\overline{N}^{(t)}_{n}(x)$ 
satisfies, and
reduce the Ore polynomial $rec$ modulo $ann$:
 \begin{itemize}
 \item[]
    \begin{mma}
	  \In |ann|=|Annihilator|[|Sum|[(|Binomial|[n+t,k]*|Binomial|[n,k]-
      |Binomial|[n+t,k+1]*|Binomial|[n,k-1])*x^k,\{k,0,n+t\}],|S|[n]];\\
	 \In |OreReduce|[|rec|,|ann|]\\
	\Out 0\\
    \end{mma}
\end{itemize}
The output is $0$, as desired.
This completes the proof.
\end{proof}

We now prove the real-rootedness of $\overline{N}^{(t)}_n(x)$.

\begin{thm}\label{overlineNP}
For any $n,t\geq 0$, the polynomial $\overline{N}^{(t)}_n(x)$
has only real zeros. If $t\geq 2$, then $\overline{N}^{(t)}_n(x)$
has one and only one positive zero.
\end{thm}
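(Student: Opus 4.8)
The plan is to argue by induction on $n$ using the three-term recurrence \eqref{Jn-rec-0} together with the interlacing criterion of Theorem \ref{liuwangthm2}, in exactly the same spirit as the proof of Theorem \ref{branthm} in Section \ref{section-3}. I would first treat the case $t\in\{0,1\}$ separately: since $N_{n,n}(x)=N_{n+1,n}(x)=N_{A_n}(x)$ is the classical Narayana polynomial, which is well known to be real-rooted, these two values of $t$ require no further work, and in this case $\overline{N}^{(t)}_n(x)=N_{A_n}(x)$ has no positive zero at all (all its zeros are negative), so the second assertion is vacuous. Hence assume $t\geq 2$ from now on.

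Next I would establish the base cases of the induction, $n=0$ and $n=1$. Here $\overline{N}^{(t)}_0(x)=N_{t,0}(x)=1$ and $\overline{N}^{(t)}_1(x)=N_{t+1,1}(x)=1+(t+1-?)x$; more precisely from \eqref{eq-nara4} one computes $\overline{N}^{(t)}_1(x)=\binom{t+1}{0}\binom{1}{0}+\bigl(\binom{t+1}{1}\binom{1}{1}-\binom{t+1}{2}\binom{1}{-1}\bigr)x=1+(t+1)x$, a linear polynomial with a single negative zero, and one also checks directly that $\overline{N}^{(t)}_0(x)\sep\overline{N}^{(t)}_1(x)$. So both base polynomials are real-rooted and interlace; at this stage there is still no positive zero, but the claim about positive zeros will only be asserted for larger $n$ once it genuinely appears, and I would verify by a short direct computation (or by examining $\overline{N}^{(t)}_2(x)$) at which stage the sign pattern of the coefficients forces exactly one sign change.

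The inductive step is the heart of the argument. Assume $\overline{N}^{(t)}_{n-1}(x)$ and $\overline{N}^{(t)}_n(x)$ are real-rooted with $\overline{N}^{(t)}_{n-1}(x)\sep\overline{N}^{(t)}_n(x)$. Rewrite \eqref{Jn-rec-0} in polynomial form by clearing the denominator $(n+t+1)(n+3)(c_0+c_1x)$, and observe that in the notation of Theorem \ref{liuwangthm2} this is a relation of type \eqref{eq-recuu} with $k=1$, taking $F(x)=(c_0+c_1x)\,\overline{N}^{(t)}_{n+1}(x)$, $f(x)=\overline{N}^{(t)}_n(x)$, $g_1(x)=\overline{N}^{(t)}_{n-1}(x)$, $\phi(x)=\frac{a_0+a_1x+a_2x^2}{(n+t+1)(n+3)}$, and $\varphi_1(x)=-\frac{n(n+t)(x-1)^2(b_0+b_1x)}{(n+t+1)(n+3)}$. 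One must then check the hypotheses: that $\deg F=\deg f+1$ (so the degrees match up appropriately after accounting for the extra linear factor $c_0+c_1x$), that $F$ and $g_1$ have leading coefficients of the same sign, and crucially that $\varphi_1(r)\leq 0$ for every zero $r$ of $f=\overline{N}^{(t)}_n(x)$. Since for $t\geq 2$ one expects $\overline{N}^{(t)}_n(x)$ to have exactly one positive zero (by the inductive hypothesis on the positive-zero count) with all others negative, the sign of $\varphi_1(r)=-\frac{n(n+t)}{(n+t+1)(n+3)}(r-1)^2(b_0+b_1r)$ is governed by the sign of $b_0+b_1r$: on negative zeros this is $\leq 0$ provided $b_1r\leq -b_0=n+1+t$, which needs an a priori bound confining the negative zeros of $\overline{N}^{(t)}_n(x)$ to a suitable interval, and on the unique positive zero one must show $(r-1)^2(b_0+b_1r)\geq 0$, i.e. $b_0+b_1r\geq 0$ there. \textbf{This verification of the sign of $\varphi_1$ at the zeros of $f$ — which forces one to control the location of the zeros of $\overline{N}^{(t)}_n(x)$ and is the only place where the hypothesis $t\geq 2$ and the precise shape of $b_0,b_1,c_0,c_1$ enter — is the main obstacle}; it may require an auxiliary lemma pinning down the smallest and largest zeros, or a more careful bookkeeping of how the factor $c_0+c_1x$ interacts with the interlacing. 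Once the hypotheses of Theorem \ref{liuwangthm2} are met, we conclude that $F(x)=(c_0+c_1x)\overline{N}^{(t)}_{n+1}(x)$ is real-rooted and $f(x)\sep F(x)$; dividing out the explicit linear factor $c_0+c_1x$ (and tracking its single real zero) then yields that $\overline{N}^{(t)}_{n+1}(x)$ is real-rooted, that $\overline{N}^{(t)}_n(x)\sep\overline{N}^{(t)}_{n+1}(x)$, and, by inspecting the sign changes in its coefficient sequence once more, that it too has exactly one positive zero, closing the induction.
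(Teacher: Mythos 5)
There is a genuine gap, and it sits exactly where you flagged ``the main obstacle'': the sign condition of Theorem \ref{liuwangthm2} cannot be salvaged for this family. For $t\geq 2$ we have $b_0=-(n+1+t)<0$ and $b_1=(t-1)^2n+(t-1)t^2/2+(t-1)^2>0$, so for every negative zero $r$ of $f(x)=\overline{N}^{(t)}_{n}(x)$ one gets $b_0+b_1r<0$ and hence $\varphi_1(r)=-\tfrac{n(n+t)}{(n+t+1)(n+3)}(r-1)^2(b_0+b_1r)>0$; your stated sufficient condition ``$b_1r\leq -b_0$'' has the inequality reversed (it forces $b_0+b_1r\leq 0$, i.e.\ the \emph{wrong} sign of $\varphi_1$), and since $n$ of the $n+1$ zeros of $\overline{N}^{(t)}_{n}(x)$ are negative, the hypothesis $\varphi_1(r)\leq 0$ of Theorem \ref{liuwangthm2} fails automatically at all of them -- no a priori localization of the zeros can repair this. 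In addition, your choice $F(x)=(c_0+c_1x)\overline{N}^{(t)}_{n+1}(x)$ has degree $n+3$ while $\deg f=n+1$, so condition (a) of Theorem \ref{liuwangthm2} ($\deg F=\deg f$ or $\deg f+1$) is violated as well; and the polynomials involved have negative leading coefficients for $t\geq 2$, which clashes with the positive-leading-coefficient convention in the definition of $\sep$. So the inductive step as proposed cannot be completed, and the ``exactly one positive zero'' statement is also left unproved (you only gesture at sign changes, without the needed closed form of the coefficients).

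The paper takes a different route precisely because Liu--Wang does not apply here. It first computes $\binom{n+t}{k}\binom{n}{k}-\binom{n+t}{k+1}\binom{n}{k-1}=\tfrac{n+1-kt}{(n+1)(k+1)}\binom{n+t}{k}\binom{n+1}{k}$, so the coefficient sequence has exactly one sign change; Descartes' rule together with $[x^0]\overline{N}^{(t)}_n(x)=1>0$ and $[x^{n+1}]\overline{N}^{(t)}_n(x)=-\binom{n+t}{n+2}<0$ gives exactly one positive zero. The real-rootedness is then obtained by a direct induction on the \emph{negative} zeros: since the coefficient of $\overline{N}^{(t)}_{n-1}(x)$ in \eqref{Jn-rec-0} is negative for all $x<0$, evaluating the recurrence at the negative zeros $r_k^{(n)}$ of $\overline{N}^{(t)}_{n}(x)$ yields $(-1)^k\overline{N}^{(t)}_{n+1}(r_k^{(n)})>0$, and combined with $\overline{N}^{(t)}_{n+1}(0)=1>0$ and the sign at $-\infty$, the intermediate value theorem produces $n+1$ negative zeros of $\overline{N}^{(t)}_{n+1}(x)$ interlacing those of $\overline{N}^{(t)}_{n}(x)$; together with the unique positive zero this accounts for all $n+2$ zeros. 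If you want to keep a Liu--Wang-style argument, you would have to redesign the decomposition so that the sign hypothesis holds at the zeros of $f$, which is exactly what the explicit sign-alternation argument replaces.
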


\begin{proof}
Note that both the polynomials $\overline{N}^{(0)}_n(x)$ and
$\overline{N}^{(1)}_n(x)$ are the classical Narayana polynomial, which is known
to be real-rooted.

We proceed to consider the case of $t\geq2$.
Assume that $\overline{N}^{(t)}_{n-1}(x) \sep \overline{N}^{(t)}_{n}(x)$.
We see that the recurrence relation \eqref{Jn-rec-0}
is of the form \eqref{eq-recuu} in Theorem \ref{liuwangthm2} with $k=1$, where
\begin{align*}
F(x)&=\overline{N}^{(t)}_{n+1}(x),\\
f(x)&=\overline{N}^{(t)}_{n}(x),\\
g_1(x)&=\overline{N}^{(t)}_{n-1}(x),\\
\phi(x)&=\frac{a_0
+a_1x
+a_2x^2}{(n+t+1)(n+3)(c_0+c_1x)},\\
\varphi_1(x)&=
-\frac{n(n+t)(x-1)^2(b_0+b_1x)}{
(n+t+1)(n+3)(c_0+c_1x)}.
\end{align*}

For any $n\geq 0$ and $t\geq 2$, $\overline{N}^{(t)}_n(x)$ is polynomial in $x$
of degree $n+1$, and for any
$0\leq k\leq n+1$, the coefficient of $x^k$ in
$\overline{N}^{(t)}_n(x)$ is
\begin{align*}
{\binom{n+t}{k}\binom{n}{k}}-{\binom{n+t}{k+1}\binom{n}{k-1}}&=\frac{n+1-kt}{
(n+1)(k+1)}{
\binom {n+t}{k}\binom{n+1}{k}}.
\end{align*}
Therefore, the number of changes in sign of the coefficients is $1$.
By Descartes' rule, the polynomial $\overline{N}^{(t)}_n(x)$ has at most one
positive zeros.
Moreover, we see that
\begin{align*}
[x^0]\overline{N}^{(t)}_n(x)=1>0, \quad [x^{n+1}]\overline{N}^{(t)}_n(x)=
-\binom{n+t}{n+2}<0.
\end{align*}
Thus, the polynomial $\overline{N}^{(t)}_n(x)$ has one and only one positive
zero.

We claim that $\overline{N}^{(t)}_n(x)$ has $n$ negative zeros, and moreover,
for any $n\geq 1$,
\begin{align*}
r_{n+1}^{(n+1)}<r_n^{(n)}<r_{n}^{(n+1)}<r_{n-1}^{(n)}<\cdots
<r_2^{(n)}<r_2^{(n+1)}<r_1^{(n)}<r_1^{(n+1)}<0,
\end{align*}
where $\{r_k^{(n)}\}_{k=0}^n$ and $\{r_k^{(n+1)}\}_{k=0}^{n+1}$ are the negative
zeros of $\overline{N}^{(t)}_n(x)$ and $\overline{N}^{(t)}_{n+1}(x)$
respectively.

The proof of the claim is by induction on $n$. To this end, we need to determine
the sign of the coefficient of $\overline{N}^{(t)}_{n-1}(x)$ in the recurrence
\eqref{Jn-rec-0}. For any $x<0, n\geq 1$ and  $t\geq 2$, it is easy to show that
\begin{align*}
-\frac{n(n+t)(x-1)^2(b_0+b_1x)}{
(n+t+1)(n+3)(c_0+c_1x)}<0.
\end{align*}
By \eqref{Jn-rec-0}, we see that $\overline{N}^{(t)}_{2}(r_1^{(1)})<0$.
Moreover, we have $\overline{N}^{(t)}_{2}(0)=1>0$ and
$\overline{N}^{(t)}_{2}(-\infty)>0$. Thus,
$r_2^{(2)}<r_1^{(1)}<r_1^{(2)}<0$, as claimed. Assume the claim is true for $n$.
From \eqref{Jn-rec-0} we deduce that
$$(-1)^k\overline{N}^{(t)}_{n+1}(r_k^{(n)})>0, \quad \mbox{for any $1\leq k\leq
n$}.$$
Moreover, we have $\overline{N}^{(t)}_{n+1}(0)=1>0$ and
$(-1)^{n+1}\overline{N}^{(t)}_{n+1}(-\infty)>0$.
Thus, the polynomials $\overline{N}^{(t)}_{n+1}(x)$ has $n+1$ negative zeros
$\{r_k^{(n+1)}\}_{k=0}^{n+1}$, and
moreover, for each $1\leq k\leq n$, we have
$r_{k+1}^{(n+1)}<r_k^{(n)}<r_{k}^{(n+1)}$, as claimed.
This completes the proof.
\end{proof}

The polynomials $\underline{N}^{(t)}_n(x)$ satisfy the following recurrence 
relation.

\begin{thm}\label{lem:Anm}
For nonnegative integers $t$ and $n\geq 1$, we have
\begin{align}\label{Jn-rec}
\underline{N}^{(t)}_{n+1}(x)&=\frac{a_0
+a_1x
+a_2x^2}{(n+t+3)(n+1)(c_0+c_1x)}\underline{N}^{(t)}_{n}(x)-\frac{
n(n+t)(x-1)^2(b_0+b_1x)}{
(n+t+3)(n+1)(c_0+c_1x)}\underline{N}^{(t)}_{n-1}(x),
\end{align}
where
\begin{align*}
a_0&=-(2n^3+(2t+5)n^2+(2t+3)n),\\
a_1&=(2t(t+2)n^3+3t(t+2)^2n^2+(t(t+2)(t^2+5t+5))n\\
& \quad +(t(t+1)(t+2)(t+3)/2)),\\
a_2&=(t+1)((2t+2)n^3+(3t^2+9t+5)n^2+(2t+3)(t^2+3t+1)n\\
& \quad +t(t+1)(t+2)(t+3)/2),\\
b_0&=-(n+1),\\
b_1&=(t+1)^2n+(t+1)(t^2+4t+2)/2,\\
c_0&=-n,\\
c_1&=(t+1)^2n+t(t+1)(t+2)/2.
\end{align*}
\end{thm}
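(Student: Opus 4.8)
\textbf{Proof proposal for Theorem \ref{lem:Anm}.}

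The plan is to mirror exactly the strategy used for Theorem \ref{lem:Anm-0}: reformulate the claimed identity as a polynomial (in fact Ore-polynomial) identity and verify it mechanically via the \textit{HolonomicFunctions} package. First I would multiply both sides of \eqref{Jn-rec} by the common denominator $(n+t+3)(n+1)(c_0+c_1x)$, clearing the fraction so that the statement to be proved becomes
\begin{align*}
(n+t+3)(n+1)(c_0+c_1x)\,\underline{N}^{(t)}_{n+1}(x)
&=(a_0+a_1x+a_2x^2)\,\underline{N}^{(t)}_{n}(x)\\
&\quad-n(n+t)(x-1)^2(b_0+b_1x)\,\underline{N}^{(t)}_{n-1}(x),
\end{align*}
with $a_0,a_1,a_2,b_0,b_1,c_0,c_1$ substituted by their explicit expressions in $n$ and $t$. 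This is a polynomial identity in $x$ whose coefficients are rational functions of $n$ and $t$, hence it holds for all $n\geq 1$ and all $t$ iff it holds as a holonomic relation.

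Next I would recall from \eqref{eq-nara4} and \eqref{eq-var} the closed form
\[
\underline{N}^{(t)}_n(x)=N_{n,n+t}(x)=\sum_{k=0}^{n}\left(\binom{n}{k}\binom{n+t}{k}-\binom{n}{k+1}\binom{n+t}{k-1}\right)x^k,
\]
feed this sum to \texttt{Annihilator} with respect to the shift operator $S_n$ (and, if desired, $S_t$) to obtain a Gröbner basis $ann$ of the annihilating ideal, convert the rearranged recurrence above to an Ore polynomial $rec$ via \texttt{ToOrePolynomial}, and finally run \texttt{OreReduce[rec, ann]}. By the general theory underlying the package (closure properties of holonomic sequences, and the fact that membership in the annihilating ideal is decidable by reduction modulo the Gröbner basis), an output of $0$ certifies that $rec$ lies in $A$, i.e.\ that the recurrence \eqref{Jn-rec} is valid identically. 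The computation is entirely analogous to the one displayed in the proof of Theorem \ref{lem:Anm-0}, only with $(n+3)(n+t+1)$ replaced by $(n+1)(n+t+3)$ and with the corresponding $a_i$ coefficients; no new conceptual ingredient is required.

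The main obstacle is purely bookkeeping rather than mathematical: one must transcribe the somewhat unwieldy polynomials $a_1,a_2,b_1,c_1$ (cubics in $n$ with $t$-dependent coefficients, including half-integer terms such as $t(t+1)(t+2)(t+3)/2$) into the Mathematica input without error, and be careful that \texttt{Annihilator} is invoked on the sum over the correct range $\{k,0,n\}$ (the upper limit here is $n$, not $n+t$ as in the $\overline N$ case, since in $\underline N^{(t)}_n$ the binomial $\binom{n}{k}$ forces $k\le n$). Once the substitution \texttt{MapThread[Rule, \ldots]} is set up correctly, the reduction returns $0$ and the proof is complete. I would therefore present the proof as a short paragraph citing \cite{Koutsch2009Advanced}, exhibiting the two \texttt{In}/\texttt{Out} cells (the \texttt{ToOrePolynomial}/\texttt{Annihilator} setup and the \texttt{OreReduce} yielding $0$), exactly as was done for the preceding theorem.
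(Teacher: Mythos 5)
Your proposal matches the paper's own proof: the authors likewise clear the denominator $(n+t+3)(n+1)(c_0+c_1x)$, encode the resulting relation with \texttt{ToOrePolynomial} (substituting the $a_i,b_i,c_i$ via \texttt{MapThread}), compute the annihilator of $\sum_{k=0}^{n}\bigl(\binom{n}{k}\binom{n+t}{k}-\binom{n}{k+1}\binom{n+t}{k-1}\bigr)x^k$ with respect to $S_n$, and verify \texttt{OreReduce} returns $0$. Your remark about the summation range $\{k,0,n\}$ is also consistent with their computation, so the proposal is correct and essentially identical to the paper's argument.
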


\begin{proof}
The proof is similar to that of Theorem \ref{lem:Anm-0}. First, we convert 
\eqref{Jn-rec} to an Ore
 polynomial:
 \begin{itemize}
 \item[]
    \begin{mma}
	\In  |rec|=|ToOrePolynomial|[(a0+a1*x+a2*x^2)*|f|[n]-(n*(n+t)
	          *(x-1)^2*(b0+b1*x))*|f|[n-1]-(n+t+3)*(n+1)*(c0+c1*x)
	          *|f|[n+1]/.|MapThread|[|Rule|,\{\{a0,a1,a2,b0,b1,c0,c1\},
	          \{-(2*n^3+(2*t+5)*n^2+(2*t+3)*n),2*t*(t+2)*n^3+3*
	          t*(t+2)^2*n^2+(t*(t+2)*(t^2+5*t+5))*n+t*(t+1)*
	          (t+2)*((t+3)/2),(t+1)*((2*t+2)*n^3+(3*t^2+9*t+5)
	          *n^2+(2*t+3)*(t^2+3*t+1)*n+t*(t+1)*(t+2)*((t+3)/2))
	          ,-(n+1),(t+1)^2*n+(t+1)*((t^2+4*t+2)/2),-n,(t+1)^2
	          *n+t*(t+1)*((t+2)/2)\}\}],|f|[n]];\\
    \end{mma}
\end{itemize}
Then compute a (Gr\"oebner) basis $ann$ of the set
of all recurrence/differential relations that $\underline{N}^{(t)}_n(x)$ 
satisfies, and
reduce the Ore polynomial $rec$ modulo $ann$:
 \begin{itemize}
 \item[]
    \begin{mma}
	 \In  |ann|=|Annihilator|[|Sum|[(|Binomial|[n,k]*|Binomial|[n+t,k]
               -|Binomial|[n,k+1]*|Binomial|[n+t,k-1])*x^k,\{k,0,n\}],|S|[n]];\\
	 \In |OreReduce|[|rec|,|ann|]\\
	\Out  0\\
    \end{mma}
\end{itemize}
We have the desired output. This completes the proof.
\end{proof}

Next we come to proving the real-rootedness of $\underline{N}^{(t)}_n(x)$.

\begin{thm}\label{underlineNP} For any $t\geq 0$ and $n\geq 0$, the polynomial $\underline{N}^{(t)}_n(x)$ has only real zeros, and moreover, we have $\underline{N}^{(t)}_n(x)\sep \underline{N}^{(t)}_{n+1}(x)$.
\end{thm}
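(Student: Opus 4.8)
The plan is to argue exactly as in the proof of Theorem~\ref{overlineNP}: we use the recurrence \eqref{Jn-rec} together with an intermediate value argument, rather than invoking Theorem~\ref{liuwangthm2} directly, since the ``coefficients'' $\phi(x)$ and $\varphi_1(x)$ occurring in \eqref{Jn-rec} are rational rather than polynomial.

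First I would record that, for $0\le k\le n$,
\[
[x^k]\,\underline{N}^{(t)}_n(x)=\binom{n}{k}\binom{n+t}{k}\cdot\frac{kt+n+t+1}{(k+1)(n+t-k+1)},
\]
which comes from $\binom nk\binom{n+t}{k}-\binom{n}{k+1}\binom{n+t}{k-1}$ by the same manipulation used for $\overline{N}^{(t)}_n(x)$ in the proof of Theorem~\ref{overlineNP}. Since $t\ge0$, every such coefficient is positive; in particular $\deg\underline{N}^{(t)}_n(x)=n$ with positive leading coefficient, $\underline{N}^{(t)}_n(0)=1>0$, and every real zero of $\underline{N}^{(t)}_n(x)$ is negative. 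Next I would fix the sign of the multiplier of $\underline{N}^{(t)}_{n-1}(x)$ in \eqref{Jn-rec}. For $t\ge0$ and $n\ge1$ one checks directly that $b_0=-(n+1)<0<b_1$ and $c_0=-n<0<c_1$, so $b_0+b_1x<0$ and $c_0+c_1x<0$ for all $x\le0$ (in particular $c_0+c_1x$ never vanishes on $(-\infty,0]$, so \eqref{Jn-rec} is valid there), whence
\[
-\frac{n(n+t)(x-1)^2(b_0+b_1x)}{(n+t+3)(n+1)(c_0+c_1x)}<0\qquad\text{for every }x\le0 .
\]

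The theorem then follows by induction on $n$. The base cases $\underline{N}^{(t)}_0(x)=1$ and $\underline{N}^{(t)}_1(x)=1+(t+1)x$ are real-rooted and satisfy $\underline{N}^{(t)}_0(x)\seps\underline{N}^{(t)}_1(x)$ trivially. Assume that $\underline{N}^{(t)}_{n-1}(x)$ and $\underline{N}^{(t)}_n(x)$ have only real zeros with $\underline{N}^{(t)}_{n-1}(x)\seps\underline{N}^{(t)}_n(x)$; by the first paragraph all these zeros are negative, and by strict interlacing they are simple and distinct, say $r_1>r_2>\cdots>r_n$ for $\underline{N}^{(t)}_n(x)$. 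Since the zeros of the degree-$(n-1)$ polynomial $\underline{N}^{(t)}_{n-1}(x)$ (which has positive leading coefficient) strictly interleave the $r_k$, we get $\sgn\bigl(\underline{N}^{(t)}_{n-1}(r_k)\bigr)=(-1)^{k-1}$ for $1\le k\le n$. Evaluating \eqref{Jn-rec} at $x=r_k$ annihilates the $\underline{N}^{(t)}_n$-term, and the multiplier of $\underline{N}^{(t)}_{n-1}$ is negative there, so $\sgn\bigl(\underline{N}^{(t)}_{n+1}(r_k)\bigr)=(-1)^{k}$ for $1\le k\le n$. Combined with $\underline{N}^{(t)}_{n+1}(0)=1>0$ and $(-1)^{n+1}\underline{N}^{(t)}_{n+1}(-\infty)>0$, the intermediate value theorem produces a zero of $\underline{N}^{(t)}_{n+1}(x)$ in each of the $n+1$ disjoint intervals $(-\infty,r_n),(r_n,r_{n-1}),\dots,(r_2,r_1),(r_1,0)$. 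As $\deg\underline{N}^{(t)}_{n+1}(x)=n+1$, these account for all of its zeros; they are real, negative, and strictly interleave $r_1,\dots,r_n$, so $\underline{N}^{(t)}_n(x)\seps\underline{N}^{(t)}_{n+1}(x)$, which in particular yields $\underline{N}^{(t)}_n(x)\sep\underline{N}^{(t)}_{n+1}(x)$.

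The main point requiring care is the second paragraph: because $\phi(x)$ and $\varphi_1(x)$ in \eqref{Jn-rec} are genuinely rational (the factor $c_0+c_1x$ sits in the denominator), Theorem~\ref{liuwangthm2} cannot be quoted verbatim, and one must instead verify by hand that the multiplier of $\underline{N}^{(t)}_{n-1}(x)$ is negative on the half-line $(-\infty,0]$ where all zeros live (equivalently, that $c_0+c_1x$ does not change sign there). The only other bookkeeping subtlety is to propagate the strictness of the interlacing through the induction, so that $\underline{N}^{(t)}_{n-1}(x)$ does not vanish at any zero of $\underline{N}^{(t)}_n(x)$ and the sign pattern above is available.
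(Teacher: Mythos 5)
Your argument is correct, and it takes a genuinely different (and in one respect more careful) route than the paper's own proof of Theorem~\ref{underlineNP}. The paper proves this theorem by quoting Theorem~\ref{liuwangthm2} directly: it takes $F=\underline{N}^{(t)}_{n+1}$, $f=\underline{N}^{(t)}_{n}$, $g_1=\underline{N}^{(t)}_{n-1}$ with the multipliers read off from \eqref{Jn-rec}, checks that the coefficients of $\underline{N}^{(t)}_{n}(x)$ are positive (so all its zeros are negative) and that the multiplier of $\underline{N}^{(t)}_{n-1}$ is nonpositive for $x<0$, and then concludes. You instead redo the underlying sign-alternation argument by hand, in the style of the paper's proof of Theorem~\ref{overlineNP}: same recurrence \eqref{Jn-rec}, same coefficient formula $[x^k]\underline{N}^{(t)}_n(x)=\binom{n}{k}\binom{n+t}{k}\frac{kt+n+t+1}{(k+1)(n+t-k+1)}$ (which is correct), but then you locate the zeros of $\underline{N}^{(t)}_{n+1}$ by evaluating \eqref{Jn-rec} at the zeros $r_k$ of $\underline{N}^{(t)}_{n}$, using the inductively known strict interlacing to get $\sgn\underline{N}^{(t)}_{n-1}(r_k)=(-1)^{k-1}$ and the negativity of the multiplier on $(-\infty,0]$ to get $\sgn\underline{N}^{(t)}_{n+1}(r_k)=(-1)^{k}$, and you count $n+1$ sign changes. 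What your route buys: it is self-contained, it delivers the strict relation $\underline{N}^{(t)}_{n}\seps\underline{N}^{(t)}_{n+1}$, and---as you point out---it sidesteps the fact that $\phi$ and $\varphi_1$ in \eqref{Jn-rec} are rational functions with denominator $c_0+c_1x$, so Theorem~\ref{liuwangthm2} as stated (polynomial multipliers, plus a degree condition that fails after clearing denominators) does not literally apply; your observation that $c_0+c_1x<0$ on $(-\infty,0]$, so that the identity can be evaluated there, is exactly the point needed to legitimize the argument. What the paper's route buys is brevity, at the cost of applying Liu--Wang slightly beyond its stated hypotheses (its displayed $\varphi_1$ also carries a stray factor $\underline{N}^{(t)}_{n-1}(x)$, evidently a typo); in effect your proof is the Liu--Wang evaluation argument written out, which is why the two approaches agree in substance.
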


\begin{proof}
We use induction on $n$.
It is straightforward to verify that
\begin{align*}
\underline{N}^{(1)}_0(x)= 1, \quad \underline{N}^{(1)}_1(x)=1+(t+1)x,\quad
 \underline{N}^{(1)}_0(x) \sep \underline{N}^{(1)}_1(x).
\end{align*}
Assume that $\underline{N}^{(t)}_{n-1}(x) \sep \underline{N}^{(t)}_{n}(x)$.
We see that the recurrence relation \eqref{Jn-rec}
is also of the form \eqref{eq-recuu} in Theorem \ref{liuwangthm2} with $k=1$, where
\begin{align*}
F(x)&=\underline{N}^{(t)}_{n+1}(x),\\
f(x)&=\underline{N}^{(t)}_{n}(x),\\
g_1(x)&=\underline{N}^{(t)}_{n-1}(x),\\
\phi(x)&=\frac{a_0
+a_1x
+a_2x^2}{(n+t+3)(n+1)(c_0+c_1x)},\\
\varphi_1(x)&=-\frac{n(n+t)(x-1)^2(b_0+b_1x)}{
(n+t+3)(n+1)(c_0+c_1x)}\underline{N}^{(t)}_{n-1}(x).
\end{align*}
Here, $a_0,a_1,a_2,b_0,b_1,c_0,c_1$ are given by \eqref{Jn-rec}.
Note that for any $n,t\geq 0$ the coefficients of $\underline{N}^{(t)}_{n+1}(x)$ are nonnegative,
since, for any $0\leq k\leq n$, the coefficient of $x^k$ in $\underline{N}^{(t)}_{n+1}(x)$ is
\begin{align*}
[x^k]\underline{N}^{(t)}_{n+1}(x) &= \binom{n}{k}\binom{n+t}{k}-\binom{n}{k+1}\binom{n+t}{k-1}\\
 &= \left(1-\frac{k}{k+1}\cdot \frac{n-k}{n-k+t+1}\right)\binom{n}{k}\binom{n+t}{k}>0.
\end{align*}

It is clear that for any $x<0$, we have $\varphi_1(x)\leq 0$. By Theorem \ref{liuwangthm2},
the polynomial $\underline{N}^{(t)}_{n+1}(x)$ is real-rooted, and moreover  $\underline{N}^{(t)}_{n}(x) \sep \underline{N}^{(t)}_{n+1}(x)$.
\end{proof}

Combining Theorems \ref{overlineNP} and \ref{underlineNP}, we obtain the following result.

\begin{cor}
For any $m,n\geq 0$, the polynomial $N_{m,n}(x)$ has only real zeros.
\end{cor}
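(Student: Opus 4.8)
The plan is to deduce the corollary immediately by combining the two real-rootedness results already established for the families $\overline{N}^{(t)}_n(x)$ and $\underline{N}^{(t)}_n(x)$, together with the two degenerate identifications noted in the introduction. The point is that every polynomial $N_{m,n}(x)$ with $m,n\geq 0$ falls into one of three mutually exhausting cases according to the sign of $m-n$.

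First I would dispose of the case $m=n$. Here one checks from \eqref{eq-nara4} that $N_{n,n}(x)=N_{A_n}(x)$, the classical Narayana polynomial, whose real-rootedness is classical (it is, for instance, the $h$-polynomial of the type-$A$ associahedron, and is recorded as known in the introduction). Next, for $m>n$, write $m=n+t$ with $t\geq 1$; then by definition \eqref{eq-var} we have $N_{m,n}(x)=N_{n+t,n}(x)=\overline{N}^{(t)}_n(x)$, which is real-rooted by Theorem \ref{overlineNP}. Finally, for $m<n$, write $n=m+t$ with $t\geq 1$; then $N_{m,n}(x)=N_{m,m+t}(x)=\underline{N}^{(t)}_m(x)$, which is real-rooted by Theorem \ref{underlineNP}. (One could equally absorb the $m=n$ case into either of the other two by allowing $t=0$, since $\overline{N}^{(0)}_n(x)=\underline{N}^{(0)}_n(x)=N_{A_n}(x)$, as observed in the proofs of those theorems; I would mention this to streamline the case split.)

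Since these three cases cover all pairs $(m,n)$ with $m,n\geq 0$, and in each case $N_{m,n}(x)$ coincides with a polynomial already shown to have only real zeros, the corollary follows. I do not anticipate any genuine obstacle here: all the analytic work — the recurrence relations verified via \textit{HolonomicFunctions}, the interlacing arguments through Theorem \ref{liuwangthm2}, the sign analysis of the coefficients of $\overline{N}^{(t)}_n(x)$ and $\underline{N}^{(t)}_n(x)$ — has already been carried out in Sections \ref{section-3} and \ref{section-4}. The only thing to be careful about is the bookkeeping of the substitution $t=|m-n|$ and the matching of indices, so that $N_{m,n}(x)$ is correctly recognized as $\overline{N}^{(m-n)}_n(x)$ when $m\geq n$ and as $\underline{N}^{(n-m)}_m(x)$ when $m\leq n$; a one-line verification against formula \eqref{eq-nara4} settles this.
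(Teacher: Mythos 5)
Your proposal is correct and matches the paper's own (implicit) argument: the corollary is obtained exactly by combining Theorems \ref{overlineNP} and \ref{underlineNP}, recognizing $N_{m,n}(x)$ as $\overline{N}^{(m-n)}_n(x)$ when $m\geq n$ and as $\underline{N}^{(n-m)}_m(x)$ when $m\leq n$ via \eqref{eq-var}. Your extra handling of the $m=n$ case is harmless bookkeeping, since $t=0$ is already covered by both theorems.
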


\vskip 3mm
\noindent{\bf Acknowledgements.} This work was supported by the 973 Project, the PCSIRT Project of the Ministry of Education and the National Science Foundation of China.
We would like to thank Dr. Christoph Koutschan for helpful suggestions on the use of the Mathematica package
\textit{HolonomicFunctions}.

\end{document}